\documentclass[10pt,bezier,amstex]{article}  % include bezier curves
\usepackage{color}              % Need the color package
\usepackage{xcolor}              % Need the color package
\usepackage{graphicx}
\usepackage[]{amsmath}
\usepackage{amssymb}
\usepackage{hyperref,bbm}
\usepackage{float}
\definecolor{MyDarkBlue}{rgb}{0,0.08,0.50}
\definecolor{BrickRed}{rgb}{0.65,0.08,0}
\usepackage{amsmath}

\hypersetup{
colorlinks=true,       % false: boxed links; true: colored links
    linkcolor=MyDarkBlue,          % color of internal links
    citecolor=BrickRed,        % color of links to bibliography
    filecolor=red,      % color of file links
    urlcolor=cyan           % color of external links
}

\usepackage{enumerate}
\usepackage[mathscr]{eucal}
\usepackage{graphicx}
\usepackage{latexsym}
\usepackage{amssymb}
\usepackage{psfrag}
\usepackage{epsfig}
\usepackage{tikz}
\usetikzlibrary{automata,positioning}

\usepackage{amsfonts}

\usepackage{graphicx}

\usepackage{amsfonts}

\usepackage{color}

\newtheorem{Lemma}{Lemma}[section]
\newtheorem{Theorem}[Lemma]{Theorem}
\newtheorem{Assumption}{Assumption}

\newtheorem{Proposition}[Lemma]{Proposition}

\newtheorem{Corollary}[Lemma]{Corollary}

\newtheorem{remark}[Lemma]{Remark}

\newcommand{\beq}{\begin{eqnarray*}}
\newcommand{\eeq}{\end{eqnarray*}}
\newcommand{\beqn}{\begin{eqnarray}}
\newcommand{\eeqn}{\end{eqnarray}}

\newcommand{\bt}{\begin{Theorem}}
\newcommand{\et}{\end{Theorem}}

\newcommand{\bas}{\begin{Assumption}}
\newcommand{\eas}{\end{Assumption}}

\newcommand{\be}{\begin{equation}}
\newcommand{\ee}{\end{equation}}

\newcommand{\mb}[1]{{\mathbf #1}}

\numberwithin{equation}{section}

\definecolor{darkgreen}{rgb}{0,.4,0}
\definecolor{darkagenta}{rgb}{.5,0,.5}
\definecolor{darkred}{rgb}{1,0,0}%was 0.85
\definecolor{darkblue}{rgb}{0,0,.4}

\newcommand{\affiliation}[1]{%
    \par {\raggedright #1 \vspace{1.2em}\noindent\par}
}

\usepackage{array}
\begin{document}

\tikzset{every node/.style={auto}}
 \tikzset{every state/.style={rectangle, minimum size=0pt, draw=none, font=\normalsize}}
 \tikzset{bend angle=20}

	\author{
    Ashwin K. Seshadri\textsuperscript{a,*} and
    Abhishek Pal Majumder\textsuperscript{b}
}

\title{Weighted spatial averages with missing observations: Convergence and accuracy of delta-method variance approximations}
\date{}

\maketitle

\affiliation{%
\textsuperscript{a}Centre for Atmospheric and Oceanic Sciences and Divecha Centre for Climate Change, Indian Institute of Science, Bengaluru 560012, India.\\
\textsuperscript{*}Corresponding author. Email: \href{mailto:ashwins@iisc.ac.in}{ashwins@iisc.ac.in}%
}

\affiliation{%
\textsuperscript{b}School of Mathematics, Statistics and Physics, Newcastle University, Newcastle upon Tyne NE1 7RU, United Kingdom.\\
Email: \href{mailto:Abhishek.Pal-Majumder@newcastle.ac.uk}{Abhishek.Pal-Majumder@newcastle.ac.uk}%
}

\begin{abstract}
Weighted spatial averages from incomplete networks are ratios whose variance is approximated by truncating a Taylor series (the delta method). Three questions remain open: whether the series converges, how large a truncation's error is, and how it behaves as the network grows. We address these for observations missing completely at random, independently with a common probability $\alpha$. First, the truncations, which count corrections for the denominator (the total reported weight) and for its coupling with the numerator, converge for a given network only if $\alpha>\tfrac12$ and both corrections increase together. Second, whether or not the series converges, the error of truncations correcting only for the denominator is controlled at any network size by its coefficient of variation $\rho$, with $\rho^2=\tfrac{1-\alpha}{\alpha}\sum_i\beta_i^2$ for weights $\beta_i$, and by the probability of rare reporting patterns. Third, for uniform weights and bounded fields, these errors vanish as the network grows, at any fixed availability. If correlations also decay with distance, the first-order approximation of earlier work has error of order $\rho^4$, against $\rho^2$ for the simplest approximation. Only truncations of equal order in numerator and denominator, beginning with the first-order truncation, avoid spurious dependence on the mean level; higher ones need not be more accurate. The exact variance for uniform weights, a single sum, shows that for correlated fields the first-order approximation overstates the contribution of missing data but is conservative overall. Concentrated weights degrade it more than $\rho$ indicates. Rainfall simulations over India illustrates results, several of which extend to self-normalized estimators.
\end{abstract}

\section*{Keywords}
Ratio estimator; Delta method; Missing completely at random; Self-normalized estimator; Sampling error; Spatial dependence; Rain gauge networks

\section{Introduction}

Spatial averages of geophysical variables, including temperature and rainfall, are often estimated from observations that are incomplete or irregular in space \cite{Rodriguez-Iturbe1974,Shen1994,Morrissey1995,Villarini2008,Prakash2019}. The uncertainty from sampling of potential observation sites, including effects of missing observations, is important for climate and hydrological applications \cite{Kagan1997}. This paper examines how random unavailability of observations affects the variance of the estimated spatial mean, mainly when observations are missing completely at random (MCAR), so that reporting is independent of the underlying field.

This uncertainty is often estimated by simulation. Global temperature products, for example, include a contribution to uncertainty from incomplete spatial coverage \cite{Morice2021}, and the effect of missing observations can be estimated by withholding observations from a more complete reference field \cite{CowtanWay2014,Cowtan2018} or by Monte Carlo simulations of alternate plausible fields \cite{Lenssen2024}. Simulations have also been used to propagate and infer uncertainty in hydrological applications \cite{Hong2006,Vrugt2008} and to assess the sampling error of area-averaged rainfall by subsampling dense rain-gauge networks \cite{Villarini2008,Prakash2019}. Although simulation-based approaches can be versatile, the calculations must be repeated across configurations and it is challenging to explicitly resolve how uncertainty depends on the availability, weighting, or spatial structure. Analytical expressions exist for the standard error of simple spatial averages from fixed gauge networks \cite{Morrissey1995}, and low-order delta-method approximations have been derived for weighted averages with missing data and compared with simulation \cite{Seshadri2018}, and used with site-specific reporting probabilities \cite{Chen2025}. However, the convergence of these approximations, their higher-order structure, and their truncation error have not been characterized previously in the literature.

The broader difficulty is a familiar one, and not confined to spatial averages. In survey sampling, ratio and H\'ajek estimators are routinely assigned a linearized variance, which is recognized as being adequate only when the coefficient of variation of the denominator is small \cite{Cochran1977,Sarndal1992}. Moreover, for the ratio estimator under simple random sampling, the Taylor expansion need not even converge for every sample, but the usual first-order approximations to its bias and mean squared error can nevertheless be justified with bounds on their error \cite{DavidSukhatme1974}, while higher-order approximations have also been developed \cite{Tin1965}. Self-normalized importance sampling and ratio metrics in online experiments rely on the same first-order approximation \cite{Hesterberg1995,Deng2018}, even though for ratios of normal variables the moments may not exist, and delta-method intervals can fail when the denominator is imprecise \cite{Geary1930,Fieller1954,Hinkley1969,Hirschberg2010}. Similarly, Taylor-series approximations to expected utility can diverge, yet low-order truncations are often reasonably accurate \cite{Loistl1976,Hlawitschka1994,Markowitz2014}, reflecting the general behavior of asymptotic expansions, whose accuracy at fixed sample size need not improve as terms are added \cite{BarndorffNielsenCox1989,Hall1992}. The present paper makes these distinctions precise for weighted spatial averages under MCAR, in which the number of reporting sites is random and may be zero, the weights may be unequal, and the field values are spatially correlated. Although the motivation is geophysical, several of the results bear on self-normalized estimators in general, since these results use only the fact that the estimator is a convex combination of the observed values with random weights.

We adopt the setting of \cite{Seshadri2018}. Let $r_i(t)$ denote the field value at site $i$, $\beta_i>0$ its prescribed weight, with $\sum_i\beta_i=1$, and $s_i(t)\in\{0,1\}$ its reporting status, so that a site enters the spatial average only when its observation is present. With $\mb{s}:=\{s_i(t):i=1,\ldots,N\}$ the reporting status across observation sites, the weighted ratio estimator is defined as
\begin{equation}
\hat{r}\left(t\right):=\frac{R\left(t\right)}{S\left(t\right)}=\frac{\sum_{i=1}^{N}\beta_{i}s_{i}\left(t\right)r_{i}\left(t\right)}{\sum_{i=1}^{N}\beta_{i}s_{i}\left(t\right)}.
\label{eq:p2}
\end{equation}
When all sites report, $S=1$ and $\widehat r$ reduces to the ordinary weighted mean. With missing observations, both the weighted sum of reported values $R$ and the total reported weight $S$ are random variables, and different reporting patterns give different values of $\widehat r(t)$ even when the field statistics are fixed. The randomness in reporting may have physical or technical origins, such as missing, unreported, or irregularly timed observations \cite{Vinnikov2004,Seshadri2018}, or arise by design, as in deliberate subsampling of a dense network to evaluate the uncertainty of an incomplete one \cite{Prakash2019}. These applications share the same statistical structure and are treated together here.

We assume that sites report independently with a common probability $\alpha$ of reporting, the availability, so that the $s_i(t)$ are i.i.d.\ Bernoulli$(\alpha)$ with $0<\alpha\le1$, and that the reporting vector is independent of the field. Missingness that does not depend on the field is called missing completely at random, in the terminology that originates with \cite{Rubin1976b}. The model used here is the special case with independent reporting and a common $\alpha$, which we refer to simply as MCAR. Because all observations may be absent whenever $\alpha<1$, we define the all-missing event $A_{\emptyset}:=\{S=0\}$. On this event we assign the estimator a bounded value $r_*$; away from it, the estimator is the weighted mean of the reported observations:
\[
\widehat{r}(t) =
\begin{cases}
r_*(t), & \text{if } A_{\emptyset} \text{ occurs}, \\[4pt]
\dfrac{\sum_{i=1}^{N}\beta_i s_i(t) r_i(t)}{\sum_{i=1}^{N}\beta_i s_i(t)}, & \text{otherwise.}
\end{cases}
\]
Under MCAR, $\mathbf{P}(A_{\emptyset})=(1-\alpha)^N$, so at fixed $\alpha>0$ this convention plays a role only on an event whose probability decreases exponentially as $N$ grows. It is nevertheless retained explicitly because it matters for very small networks or very low availability.

Ratio estimators have a long history \cite{Hartley1954,Goodman1958,Hinkley1969,Kendall1994}, and related forms arise in geophysical averaging \cite{Gandin1993,Kagan1997,Seshadri2018}. The estimator in Eq.~\eqref{eq:p2} is a self-normalized, H\'ajek-type estimator \cite{Horvitz1952,Hajek1971} wherein the reported observations are averaged using their realized total weight $\sum_{i=1}^{N}\beta_i s_i(t)$. Its moments can be evaluated exactly by averaging over the reporting patterns, but for arbitrary weights there are as many as $2^N$ of these, which is impractical for large networks. This paper instead uses the classical delta-method expansion of a ratio \cite{Oehlert1992}, which gives the low-order approximations as members of a common sequence.

The expansion contains two kinds of correction: terms in powers of the fluctuation of the denominator $S$ about its mean, and terms that couple these fluctuations with those of $R$. We label an approximation $(n_1,n_2)$ by the number of corrections of each kind that it retains. The simplest members are $(0,0)$, which treats the denominator as fixed at its mean, and $(0,1)$, which adds the first correction for its randomness. We call $(0,1)$ the first-order approximation; it is the variance estimator of \cite{Seshadri2018}. Under MCAR, each approximation depends on the field only through its first and second moments. For general weights the explicit formulas become increasingly cumbersome as the order rises, and we derive them for the low orders, with the full algebra provided in the Supplementary Information. For uniform weights, the exact variance reduces to a single sum over the number of reporting sites, which holds for any spatial covariance and needs no simulation. Uniform weights therefore serve two purposes: in practice they make the approximations unnecessary, and they provide an exact benchmark that indicates how accurate the approximations are for general weights, where exact evaluation is impractical.

Three related questions are distinguished in this paper. The first is convergence: whether the approximations approach the exact variance as more terms of the series are retained, for a network of given size $N$. The second is the truncation error, the difference between the exact variance and a given approximation for a given network, which is well defined whether or not the series converges. The third, which we call fixed-order accuracy, concerns how the truncation error of a given approximation behaves as the network grows. Both the truncation error and fixed-order accuracy are controlled mainly by the concentration of the random denominator, summarized by
\[
\rho:=\frac{\sqrt{\mathrm{Var}(S)}}{\mathbf{E}S},
\]
the coefficient of variation of the total reported weight. The second and third questions are the ones of practical relevance.

The main contributions are as follows:
\begin{enumerate}
\item Convergence and accuracy are distinct. Under MCAR the truncated variances converge only if $\alpha>\tfrac12$ and the two kinds of correction are increased together, whereas the truncation error of every $(0,n_2)$ approximation is bounded at each network size through $\rho$ and, for uniform weights and a bounded field, vanishes as the network grows at any fixed availability (Theorem~\ref{thm:rho-error}, Corollary~\ref{cor:mar-error}, and Table~\ref{tab:three-results}).
\item For uniform weights and correlations that decay with distance, the error of the first-order approximation is of order $\rho^4$, compared with $\rho^2$ for the simplest approximation, which helps explain why the estimator of \cite{Seshadri2018} has performed well (Proposition~\ref{prop:rho4}).
\item Only truncations that treat numerator and denominator to the same order, which we call shift-invariant, avoid a spurious dependence on the mean level of the field, and even these need not improve on the first-order approximation (Section~\ref{sec:shift} and Table~\ref{tab:truncation-properties}).
\item For uniform weights, the exact variance separates into the variance of the average with complete reporting and a contribution of missing data. Relative to that contribution, the error of the first-order approximation is set by a dimensionless ratio $\lambda$ of the variance of the average with complete reporting to the expected spatial variance of a single realization, which is small under local dependence but not for strongly correlated fields. Relative to the total variance, the error stays of order $\rho^2$ whatever the spatial dependence, and for correlated fields the approximation is conservative (Proposition~\ref{prop:uniform-decomp} and Remark~\ref{rem:lambda}).
\item Stress tests show that concentrated weights, whose effect $\rho$ only partly reflects, can make the first-order approximation unreliable even at moderate $\rho$. Together, these results indicate when the delta method can be trusted for spatial averages and when an exact calculation is preferable, including a rule of thumb for the number of sites needed for a given accuracy (Table~\ref{tab:practical}).
\end{enumerate}

Three further points should be noted. First, $\mathrm{Var}(\widehat r)$ is the variance of $\widehat r(t)$ over time, arising jointly from the variability of the field and of the reporting process, whose statistics are taken to be constant in time. No assumption about temporal dependence is needed, and we omit the argument $t$ when it is evident. Second, if the field is instead frozen in time, so that only reporting varies, the same framework gives the variance due to sampling alone, and for uniform weights the two sources of variability can be separated exactly (Proposition~\ref{prop:uniform-decomp}). Third, the expansion, the role of $\rho$, and the error bound carry over to sites that report independently with different probabilities, including probabilities that depend on observed covariates, so that the analysis is not limited to the MCAR model here (Remark~\ref{prop:het-alpha}).

Section~\ref{sec:moments} develops the theory for the three questions, and bounds the truncation error through the concentration of the denominator. Section~\ref{sec:truncated} specializes to MCAR, giving explicit low-order formulas, identifying the truncations that are invariant under a common shift of the field, and deriving the exact variance for uniform weights. Section~\ref{sec:numerics} tests these results against exact calculations, including for the IMD rainfall field and under various stress conditions, and Section~\ref{sec:discussion} discusses their implications and limitations. The proofs are given in Appendices~\ref{app:tools}--\ref{app:proof-rho4}, and the Supplementary Information gives the derivations of the truncated MCAR moments and additional numerical checks.

\section{Convergence, truncation error, and fixed-order accuracy}
\label{sec:moments}

Because the reporting vector $\mathbf{s}$ is random, the estimator $\widehat r=R/S$ is a ratio with a random denominator. This section expands the ratio in a Taylor series about the means of its numerator and denominator, and addresses the three questions set out in the Introduction: when the series converges, how large the error of a finite truncation is, and how that error behaves as the network grows. The analysis applies to a finite network with any spatial covariance.

The expansion is organized by the normalized denominator fluctuation
\[
\frac{S-\mathbf{E}S}{\mathbf{E}S}.
\]
Its typical size is measured by its root-mean-square value, the coefficient of variation of $S$,
\begin{equation}
\rho:=\frac{\|S-\mathbf{E}S\|_2}{\mathbf{E}S}
=\frac{\sqrt{\mathrm{Var}(S)}}{\mathbf{E}S},
\label{eq:def-rho-general}
\end{equation}
which is the main parameter controlling the accuracy of finite truncations (Section~\ref{sec:accuracy}).

Under MCAR, $s_i\stackrel{\mathrm{i.i.d.}}{\sim}\mathrm{Bernoulli}(\alpha)$ and $\sum_i\beta_i=1$, so
\[
\mathbf{E}S=\alpha,
\qquad
\mathrm{Var}(S)=\alpha(1-\alpha)W_2,
\qquad
W_2:=\sum_i\beta_i^2,
\]
and hence
\begin{equation}
\rho=\sqrt{\frac{1-\alpha}{\alpha}}\,\sqrt{W_2}.
\label{eq:rho-mar-intro}
\end{equation}
The quantity $1/W_2$ is Kish's effective sample size for unequal weights \cite{Kish1965}: it equals $N$ for uniform weights and decreases as weight is concentrated on fewer sites. Thus $\rho^2$ is the ratio of the odds $(1-\alpha)/\alpha$ that a site does not report to the effective number of sites, and the denominator is concentrated when availability is high and the weight is spread over many sites. For uniform weights, $\rho^2=(1-\alpha)/(\alpha N)$.

\begin{table}[t]
\centering
\small
\renewcommand{\arraystretch}{1.25}
\begin{tabular}{c p{0.6\linewidth}}
\hline
Symbol & Meaning \\
\hline
$N$ & number of sites \\
$r_i(t)$ & field value at site $i$ (e.g.\ rainfall, temperature) \\
$m_i,\ \sigma_{ij}$ & site means $m_i=\mathbf{E}r_i$ and covariances $\sigma_{ij}=\mathrm{Cov}(r_i,r_j)$, with $\sigma_i^2=\sigma_{ii}$ \\
$\beta_i$ & weight of site $i$ (e.g.\ area fraction); $\beta_i>0$, $\sum_i\beta_i=1$ \\
$s_i(t)$ & reporting indicator; independent $\mathrm{Bernoulli}(\alpha)$ under MCAR \\
$\alpha$ & availability: probability that a site reports \\
$K=\sum_i s_i$ & number of reporting sites \\
$R=\sum_i\beta_i s_i r_i$ & numerator: weighted sum of reported values \\
$S=\sum_i\beta_i s_i$ & denominator: total weight of reporting sites \\
$\widehat r=R/S$ & spatial average on $S>0$; a bounded value $r_*$ on the all-missing event $A_\emptyset=\{S=0\}$ \\
$m_R,\ m_S$ & $m_R=\mathbf{E}R$ and $m_S=\mathbf{E}S$; under MCAR, $m_S=\alpha$ and $m_R=\alpha m_r$ \\
$m_r=\sum_i\beta_im_i$ & weighted mean of the site means \\
$\bar r=\sum_i\beta_ir_i$ & average with complete reporting \\
$\mathrm{Var}(\widehat r)-\mathrm{Var}(\bar r)$ & contribution of missing data \\
$U=R-m_R,\ V=S-m_S$ & numerator and denominator fluctuations \\
$W_2=\sum_i\beta_i^2$ & weight concentration; $1/W_2$ is the effective number of sites ($=N$ for uniform weights) \\
$\rho=\sqrt{\mathrm{Var}(S)}/m_S$ & coefficient of variation of $S$; $\rho^2=\tfrac{1-\alpha}{\alpha}W_2$ under MCAR \\
$\mu_2^{(n_1,n_2)}$ & variance approximation retaining $n_1$ coupling and $n_2$ denominator corrections \\
$\mu_{2,1}=\mathbf{E}[U^2V]$ & mixed moment: dependence of numerator variability on the reported weight \\
$\mu_{1,2}=\mathbf{E}[UV^2]$ & mixed moment: displacement of the numerator by large denominator fluctuations \\
$\kappa=\mathrm{Var}(UV)$ & variance of the numerator--denominator product \\
$s^2,\ \lambda$ & spatial variance of a realization, $s^2=N^{-1}\sum_i(r_i-\bar r)^2$, and $\lambda=\mathrm{Var}(\bar r)/\mathbf{E}s^2$ (uniform weights) \\
\hline
\end{tabular}
\caption{Main notation used in the paper. The argument $t$ is omitted when evident.}
\label{tab:notation}
\end{table}

The first two subsections derive the expansion and establish when the series and its moments converge, and Section~\ref{sec:accuracy} bounds the truncation error, whether or not the series converges, and derives the fixed-order accuracy of the truncations. Much of this analysis does not depend on the missing-data model. The convergence conditions, the error bound of Theorem~\ref{thm:rho-error}, and the invariance under a common shift (Section~\ref{sec:shift}) require only that $\widehat r$ be a weighted average of the observed values with random nonnegative weights that sum to one. The MCAR assumption is required only for the explicit thresholds, formulas, and rates, and the Bernstein bound requires independent reporting. Table~\ref{tab:notation} summarizes the notation.

\subsection{Taylor series expansion of the ratio function}

We expand $f(R,S)=R/S$ about $(\mathbf{E}R,\mathbf{E}S)$, following the classical delta method, and later retain only a finite number of terms. With $x=(S-\mathbf{E}S)/\mathbf{E}S$, the ratio is a geometric series in $-x$,
\[
\frac{R}{S}=\frac{R}{\mathbf{E}S}\,\frac{1}{1+x}=\frac{R}{\mathbf{E}S}\sum_{n=0}^{\infty}(-x)^n,
\]
and writing $R=\mathbf{E}R+(R-\mathbf{E}R)$ splits it into two sums,
\begin{eqnarray}
f(R,S)
&=& \sum_{n=0}^{\infty} (-1)^{n} 
      \frac{R - \mathbf{E}R}{(\mathbf{E}S)^{n+1}}(S - \mathbf{E}S)^{n}
  + \sum_{n=0}^{\infty} (-1)^{n}
      \frac{\mathbf{E}R}{(\mathbf{E}S)^{n+1}}(S - \mathbf{E}S)^{n}.
\label{eq:p12}
\end{eqnarray}
This is the Taylor series of $f$ about $(\mathbf{E}R,\mathbf{E}S)$: since $f$ is linear in $R$, every derivative containing two or more $R$-derivatives vanishes, and only these two families of terms remain. The first sum carries the fluctuation $R-\mathbf{E}R$ of the numerator and the second its mean $\mathbf{E}R$, and in both sums the higher-order terms involve higher powers of $x$. These representations hold only where the series converges, a condition examined in Section~\ref{sec:conv}. The variance approximations are built from the moments of these series.

\subsection{Convergence of the variance series}
\label{sec:conv}

We first consider convergence, before examining the error of finite truncations. The finite truncations $\mu_2^{(n_1,n_2)}=\mathbf{E}[\mathcal T_{n_1,n_2}[f]^2]$, defined in Section~\ref{sec:accuracy}, are expectations of explicit polynomials in $(R,S)$ and remain well defined whether or not the full series converges. However, convergence of the series itself, and of its moments, involves two requirements. First, the series in Eq.~\eqref{eq:p12} must converge for the realized pair $(R,S)$. This is determined by the factor $|(S-\mathbf{E}S)/\mathbf{E}S|$, and absolute convergence holds when this is strictly less than one, as for any geometric series \cite{Rudin1976}. Second, the expectations of the partial sums must converge, and if the series converges absolutely and its partial sums are dominated by an integrable function, dominated convergence ensures this \cite{Weir1973}. Together, these conditions ensure that the moment series obtained from Eq.~\eqref{eq:p12} converge to the moments of $\widehat r$.

All convergence statements in this section therefore refer to reporting patterns in which at least one site reports, $S>0$. On the all-missing event $A_\emptyset=\{S=0\}$ the series cannot represent the estimator, since neither of the two sums in Eq.~\eqref{eq:p12} converges there (Remark~\ref{rem:trunc-conv}). Instead, $\widehat r$ is assigned the bounded value $r_*$ described in the Introduction, and the contribution of this event to the truncation error is bounded explicitly in Section~\ref{sec:accuracy}.

\begin{Proposition}[Convergence on nonempty reporting patterns]\label{prop:var-conv}
Suppose that $m_S=\mathbf{E}S>0$ and that $\mathbf{E}r_i^2<\infty$ for every site. If, whenever at least one site reports, the denominator stays within a fixed fraction $\bar q<1$ of its mean,
\begin{equation}
\frac{|S-\mathbf{E}S|}{\mathbf{E}S}\le\bar q
\quad\text{almost surely on }\{S>0\},
\label{eq:suff-a.s}
\end{equation}
then the series in Eq.~\eqref{eq:p12} converges absolutely on every nonempty reporting pattern, and the second moment of $\widehat r$ on these patterns may be computed term by term. Under MCAR, condition~\eqref{eq:suff-a.s} holds if and only if $\alpha>\tfrac12$.
\end{Proposition}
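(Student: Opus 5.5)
The proof has two independent parts: convergence and term-by-term integration under \eqref{eq:suff-a.s}, and the characterization of \eqref{eq:suff-a.s} under MCAR. For the first part I would work pathwise on $\{S>0\}$ with $x=(S-\mathbf{E}S)/\mathbf{E}S$. By \eqref{eq:suff-a.s}, $|x|\le\bar q<1$ almost surely there. Each of the two sums in Eq.~\eqref{eq:p12} is then a geometric series in $-x$ multiplied by $(R-\mathbf{E}R)/\mathbf{E}S$ or by $\mathbf{E}R/\mathbf{E}S$, so both converge absolutely and their sum equals $R/S=\widehat r$. The key quantitative point is a deterministic dominating bound. Every partial sum, and the sum of absolute values of the terms, is at most
\[
G:=\frac{|R-\mathbf{E}R|+|\mathbf{E}R|}{\mathbf{E}S\,(1-\bar q)}
\quad\text{on }\{S>0\}.
\]
Since $\mathbf{E}r_i^2<\infty$, Cauchy--Schwarz gives $\mathbf{E}R^2\le \mathbf{E}\bigl(\sum_i\beta_i|r_i|\bigr)^2<\infty$, so $G^2$ is integrable. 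This holds whatever the joint law of $(\mathbf{s},\{r_i\})$, because $0\le s_i\le1$.

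Next I would pass to the second moment. The squared partial sums converge pointwise on $\{S>0\}$ to $\widehat r^{\,2}$ and are dominated by $G^2$. Dominated convergence therefore gives $\mathbf{E}[(\text{partial sum})^2\mathbf{1}_{\{S>0\}}]\to\mathbf{E}[\widehat r^{\,2}\mathbf{1}_{\{S>0\}}]$. To justify computing this moment term by term, I would expand the square of the full series as a double series. Its terms are of the form $c\,(R-\mathbf{E}R)^a(\mathbf{E}R)^{2-a}(-x)^{n+m}/(\mathbf{E}S)^2$ with $a\in\{0,1,2\}$. The expectation of the sum of their absolute values is at most $\mathbf{E}[G^2]\sum_{n,m}\bar q^{\,n+m}(1-\bar q)^2<\infty$. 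Fubini--Tonelli then allows interchanging expectation with the double summation, so the moment series obtained from Eq.~\eqref{eq:p12} converges to the second moment on nonempty patterns. I expect this bookkeeping, that is, getting a single integrable majorant valid for all cross terms, to be the only delicate step, and the bound $G$ above handles it.

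For the MCAR characterization, I would use the fact that $S$ takes finitely many values. When $0<\alpha<1$, every reporting pattern has positive probability, so ``almost surely on $\{S>0\}$'' amounts to a condition on every nonempty pattern. On such patterns $\min_i\beta_i\le S\le 1$, with both extremes attained: a single site with minimal weight, and all sites reporting. Since $\mathbf{E}S=\alpha$, this gives
\[
-1<-1+\frac{\min_i\beta_i}{\alpha}\le x\le\frac{1-\alpha}{\alpha}.
\]
The lower bound is automatically strictly above $-1$ because $\beta_i>0$. Hence \eqref{eq:suff-a.s} holds for some $\bar q<1$ if and only if $(1-\alpha)/\alpha<1$, that is, $\alpha>\tfrac12$. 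In that case one can take $\bar q=\max\{(1-\alpha)/\alpha,\;1-\min_i\beta_i/\alpha\}$. Conversely, if $\alpha\le\tfrac12$, the all-report pattern has probability $\alpha^N>0$ and gives $x=(1-\alpha)/\alpha\ge1$, so the condition fails. The boundary case $\alpha=1$ is trivial: $S\equiv1$ and $x\equiv0$, consistent with $\alpha>\tfrac12$.
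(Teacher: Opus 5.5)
Your proposal is correct and takes essentially the same route as the paper. Both arguments use the integrable majorant $(|R-\mathbf{E}R|+|\mathbf{E}R|)/\{\mathbf{E}S(1-\bar q)\}$ with dominated convergence, where you only make the Cauchy-product interchange explicit via Tonelli--Fubini. The MCAR characterization is the same extremal-pattern argument, giving $\bar q=\max\{(1-\alpha)/\alpha,\,1-\beta_{\min}/\alpha\}$ for $\alpha>\tfrac12$ and failure on the all-report pattern when $\alpha\le\tfrac12$.
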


The proof is given in Appendix~\ref{app:proofs-sec2}.

Under MCAR, the relative deviation $|S-\mathbf{E}S|/\mathbf{E}S$ above the mean is largest when every site reports, $S=1$, where it equals $(1-\alpha)/\alpha$, and any deviation below the mean is at most $1-\beta_{\min}/\alpha$, attained when only the site with the smallest weight $\beta_{\min}$ reports. The second is always below one, so condition~\eqref{eq:suff-a.s} holds exactly when $\alpha>\tfrac12$, with $\bar q$ the larger of the two. The condition $\alpha>\tfrac12$ cannot be relaxed. When $\alpha\le\tfrac12$, the deviation $(1-\alpha)/\alpha$ on the pattern in which every site reports is at least one, so the terms of the series do not shrink there and it does not converge, apart from the trivial case $R=0$. At $\alpha=\tfrac12$ the factor equals one, so the terms neither grow nor shrink, and the partial sums oscillate without converging (Table~\ref{tab:convergence}). Thus $\alpha>\tfrac12$ is both necessary and sufficient for convergence on every nonempty reporting pattern, although it is not needed for accurate finite truncations (Section~\ref{sec:accuracy}). Small $\rho$ does not ensure convergence either: $\rho$ measures the typical size of the normalized fluctuation, whereas convergence requires its largest value to stay below one. In a large network with $\alpha<\tfrac12$, for example, $\rho$ is small but the series still diverges on the pattern in which every site reports.

The field need not be nonnegative: on $S>0$, $\widehat r$ is a weighted average of the reported values, so that $|\widehat r|\le\max_i|r_i|$ whatever their signs.

\subsection{Truncation error and fixed-order accuracy, controlled by $\rho$}
\label{sec:accuracy}

Having considered when the series and its moments converge, we now turn to the quantity of most practical interest, the truncation error, the difference between the exact variance and a truncated approximation for a given network. It is well defined whether or not the series converges, and includes the contribution of rare reporting patterns on which the series fails. We then turn to fixed-order accuracy, which concerns how this error behaves as $\rho\to0$, for example as the network grows. The bounds below are first stated in terms of normalized moments of the denominator fluctuation, and are then specialized to MCAR at fixed availability with weights spread over many sites, where $\rho$ becomes the natural small parameter. A reader interested mainly in applying the approximations may focus on the statements of Theorem~\ref{thm:rho-error}, Corollary~\ref{cor:mar-error}, and Proposition~\ref{prop:rho4}, and then turn to the low-order formulas of Section~\ref{sec:truncated} and the numerical results of Section~\ref{sec:numerics}.

We write $U:=R-m_R$ and $V:=S-m_S$, where $m_R:=\mathbf{E}R$ and $m_S:=\mathbf{E}S$, and for $p\ge2$ we define
\[
\rho_p:=\left(\mathbf{E}\left|V/m_S\right|^p\right)^{1/p},
\qquad\text{so that}\qquad
\rho_2=\rho.
\]
Bounds on the higher-order terms involve these higher moments of the denominator, which under MCAR at fixed $\alpha$ are comparable to $\rho$ (Lemma~\ref{lem:term-order} below).

\paragraph{Centered expansion and truncation.}
Substituting $R=m_R+U$ and $S=m_S+V$ in Eq.~\eqref{eq:p12} yields two families of terms, $(-1)^n U V^n/m_S^{n+1}$ and $(-1)^n m_R V^n/m_S^{n+1}$. Subtracting the mean $\mathbf{E}f$ replaces $UV^n$ by $UV^n-\mathbf{E}(UV^n)$ and $V^n$ by $V^n-\mathbf{E}V^n$, giving the sum of two centered series,
\begin{align}
f - \mathbf{E}f
&= \sum_{n=0}^{\infty} (-1)^n
   \frac{U V^n - \mathbf{E}(UV^n)}{m_S^{\,n+1}}
 \;+\;
 \sum_{n=0}^{\infty} (-1)^{n}
   \frac{m_R\big(V^n-\mathbf{E}V^n\big)}{m_S^{\,n+1}},
\label{eq:sec5-center}
\end{align}
as detailed in Supplementary Information Section~S1. This representation involving infinite sums only holds where the series converges and termwise expectations are justified (Proposition~\ref{prop:var-conv}). We call the first sum the mixed series and the second the denominator series.

The $(n_1,n_2)$ truncation $\mathcal T_{n_1,n_2}[f]$ keeps the terms $n=0,\ldots,n_1$ of the mixed series and $n=0,\ldots,n_2$ of the denominator series. The index $n_2$ thus counts corrections in higher powers of the denominator fluctuation, and $n_1$ counts higher-order coupling between numerator and denominator. The corresponding variance approximation is
\[
\mu_2^{(n_1,n_2)}:=\mathbf{E}\!\left[\mathcal T_{n_1,n_2}[f]^2\right].
\]
In contrast to the infinite series, the truncations are finite sums and are well defined in all cases. We write $P_{n_1,n_2}$ for the corresponding uncentered partial sum of Eq.~\eqref{eq:p12}, so that $\mathcal T_{n_1,n_2}[f]=P_{n_1,n_2}-\mathbf{E}P_{n_1,n_2}$ and $\mu_2^{(n_1,n_2)}=\mathrm{Var}(P_{n_1,n_2})$. Before bounding the truncation error, we note how the truncated variances behave as more terms are retained.

\begin{remark}[Convergence of the truncated variances]
\label{rem:trunc-conv}
For a network of given finite size there are finitely many reporting patterns, although their number grows exponentially with $N$, and each truncated variance $\mu_2^{(n_1,n_2)}=\mathrm{Var}(P_{n_1,n_2})$ is an average of the squared deviation over all of these patterns. Under MCAR every reporting pattern has positive probability, so the truncated variances converge as more terms are retained in the series precisely when the partial sums $P_{n_1,n_2}$ converge on every pattern. We consider the nonempty patterns and the all-missing event $A_\emptyset$ in turn.

On the nonempty patterns, the series converges if and only if $\alpha>\tfrac12$ (Proposition~\ref{prop:var-conv}), apart from trivial cases in which the average with complete reporting, $\bar r=\sum_i\beta_ir_i$, is identically zero. When $\alpha>\tfrac12$, both sums converge on every such pattern, implying that $P_{n_1,n_2}$ approaches $\widehat r$ whenever both indices increase, whatever their difference. When $\alpha\le\tfrac12$, the series diverges on the pattern in which every site reports, and the sequence of truncated variances does not approach the exact variance.

On $A_\emptyset$, by contrast, the series does not converge unless $m_r=0$, but each partial sum is finite with $U=-m_R$ and $V=-m_S$, so that every term of the mixed series equals $-m_r$ and every term of the denominator series equals $+m_r$, where $m_r=m_R/m_S$ is the weighted mean of the site means. Since $P_{n_1,n_2}$ contains $n_1+1$ terms of the first kind and $n_2+1$ of the second,
\[
P_{n_1,n_2}=(n_2-n_1)\,m_r\qquad\text{on }A_\emptyset ,
\]
and depends only on the difference $n_2-n_1$, remaining fixed if the difference is held fixed, and equaling $m_r$ at every order along the sequence $n_2=n_1+1$. In contrast it grows without bound if the difference grows, as in the $(0,n_2)$ family, unless $m_r=0$.

Combining the two cases, we see that the truncated variances at fixed $N$ converge if $\alpha>\tfrac12$ and $n_2-n_1$ is held fixed, and otherwise fail to converge, apart from trivial cases (Table~\ref{tab:convergence}). Their limit is the exact variance of $\widehat r$ computed with the value $(n_2-n_1)\,m_r$ on $A_\emptyset$. This value is implied by the sequence of truncations and need not equal the convention $r_*$ chosen in the Introduction, but since $A_\emptyset$ has probability $(1-\alpha)^N$ the difference is exponentially small in $N$. Along $n_2=n_1+1$ the implied value is $m_r$, a natural choice and the one used for the exact uniform-weight variance in Section~\ref{sec:exact-uniform}.

Convergence at fixed $N$ therefore depends on how the truncations are increased, and holds only for $\alpha>\tfrac12$. The difference $n_2-n_1$ also matters on every pattern for another reason: only truncations with $n_2=n_1+1$ are invariant under a common shift of the field (Section~\ref{sec:shift}). By contrast, the bounds on the truncation error and the fixed-order accuracy established below hold for every fixed $\alpha\in(0,1)$, whether or not the series converges.
\end{remark}

\begin{Lemma}[Size of the terms at fixed availability]
\label{lem:term-order}
(i) For $n\ge1$, the $n$th centered terms of the mixed and denominator series in Eq.~\eqref{eq:sec5-center} satisfy
\[
\Big\lVert \frac{UV^n-\mathbf{E}(UV^n)}{m_S^{\,n+1}}\Big\rVert_2
\le\frac{\lVert U\rVert_4}{m_S}\,\rho_{4n}^{\,n},
\qquad
\Big\lVert \frac{m_R\,(V^n-\mathbf{E}V^n)}{m_S^{\,n+1}}\Big\rVert_2
\le\frac{|m_R|}{m_S}\,\rho_{2n}^{\,n}.
\]
(ii) Under MCAR with a fixed availability $0<\alpha<1$, for every $p\ge2$ there is a constant $C_{p,\alpha}$, independent of $N$ and of the weights, such that
\[
\rho\le\rho_p\le C_{p,\alpha}\,\rho .
\]
(iii) Consequently, at fixed $\alpha$, as $\rho\to0$ through increasingly diffuse weights, and in particular for uniform weights as $N\to\infty$, the $n$th denominator term is $\mathcal O(\rho^n)$, and so is the $n$th mixed term whenever $\lVert U\rVert_4/m_S=\mathcal O(1)$, as it is for bounded fields.
\end{Lemma}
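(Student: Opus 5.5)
Part (i) follows from elementary norm inequalities. Centering never increases the $L^2$ norm: $\|X-\mathbf{E}X\|_2\le\|X\|_2$, since the variance is at most the second moment. For the mixed term, write $\mathbf{E}(U^2V^{2n})\le\|U\|_4^2\,\|V^n\|_4^2$ by Cauchy--Schwarz. Then $\|V^n\|_4=(\mathbf{E}|V|^{4n})^{1/4}=\|V\|_{4n}^n$, and dividing by $m_S^{n+1}$ gives
\[
\frac{\|U\|_4}{m_S}\,\Big(\frac{\|V\|_{4n}}{m_S}\Big)^n=\frac{\|U\|_4}{m_S}\,\rho_{4n}^n .
\]
For the denominator term, $\|V^n-\mathbf{E}V^n\|_2\le\|V^n\|_2=\|V\|_{2n}^n$. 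Multiplying by $|m_R|/m_S^{n+1}$ gives the stated bound. Neither step uses MCAR.

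For part (ii), the lower bound $\rho\le\rho_p$ is Lyapunov's (monotonicity of $L^p$ norms on a probability space). For the upper bound, write $V=\sum_i\beta_i(s_i-\alpha)$, a sum of independent, centered, bounded variables. I would apply Rosenthal's inequality, or equivalently the Marcinkiewicz--Zygmund inequality. This gives
\[
\|V\|_p\le C_p\Big(\sqrt{\mathrm{Var}(V)}+\big(\textstyle\sum_i\beta_i^p\,\mathbf{E}|s_i-\alpha|^p\big)^{1/p}\Big).
\]
Since $\mathbf{E}|s_i-\alpha|^p\le\alpha(1-\alpha)$ and $\sum_i\beta_i^p\le W_2^{p/2}$ (because $\max_i\beta_i\le\sqrt{W_2}$ and $p\ge2$), the second term is at most $(\alpha(1-\alpha))^{1/p}\sqrt{W_2}$. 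Dividing by $m_S=\alpha$ and using $\rho=\sqrt{(1-\alpha)/\alpha}\,\sqrt{W_2}$, both terms are a constant depending only on $(p,\alpha)$ times $\rho$.

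A cleaner alternative avoids Rosenthal: the Marcinkiewicz--Zygmund/Khintchine bound $\|V\|_p\le C_p\|(\sum_i\beta_i^2(s_i-\alpha)^2)^{1/2}\|_p\le C_p\sqrt{W_2}$, using $|s_i-\alpha|\le1$. After normalization this is $C_p\sqrt{W_2}/\alpha=C_p\,\rho/\sqrt{\alpha(1-\alpha)}$. I would present this version, with $C_{p,\alpha}=C_p/\sqrt{\alpha(1-\alpha)}$; it is finite for fixed $\alpha\in(0,1)$ and independent of $N$ and the weights.

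Part (iii) then combines (i) and (ii). The denominator term is bounded by $(|m_R|/m_S)\,C_{2n,\alpha}^n\rho^n$, where $|m_R|/m_S=|m_r|$ does not depend on the network's missingness. The mixed term is bounded by $(\|U\|_4/m_S)\,C_{4n,\alpha}^n\rho^n$. For a bounded field $|r_i|\le M$ we have $|R|\le M$ and hence $\|U\|_4\le 2M$, giving $\|U\|_4/m_S\le 2M/\alpha=\mathcal O(1)$. For uniform weights $\rho^2=(1-\alpha)/(\alpha N)\to0$.

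The main obstacle is obtaining a moment inequality whose constant is uniform in $N$ and the weights. A naive bound such as $|V|\le1$ or $\max_i\beta_i$ does not scale with $\rho$. The Khintchine/Marcinkiewicz--Zygmund route resolves this by reducing the $p$-th moment to the square function, which is deterministically bounded by $\sqrt{W_2}$.
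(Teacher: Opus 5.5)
Your proposal is correct and follows essentially the same route as the paper: part (i) uses centering plus Cauchy--Schwarz, and part (ii) uses a moment inequality giving $\lVert V\rVert_p\le C_p\sqrt{W_2}$ together with monotonicity of $L^p$ norms. This yields the same constant $C_{p,\alpha}=C_p/\sqrt{\alpha(1-\alpha)}$. Your Marcinkiewicz--Zygmund step stands in for the paper's ``standard moment inequality, or Hoeffding's lemma,'' and plays the same role, so nothing is missing.
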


The proof is given in Appendix~\ref{app:proofs-sec2}. Part (ii) holds because $V$ is a weighted sum of many small, independent, centered reporting indicators. Such a sum has a typical size set by its standard deviation, and its higher moments are, up to constants that depend on $\alpha$, powers of that standard deviation, as for a Gaussian variable.

By part (iii), the $n$th denominator term satisfies $\lVert m_R(V^n-\mathbf{E}V^n)\rVert_2/m_S^{\,n+1}=\mathcal O(\rho^n)$, so each additional order of the denominator series contributes one further factor of $\rho$. The mixed terms are smaller still when the field is only locally dependent (Eq.~\eqref{eq:local-dep-sec2} below). Since $V$ is independent of the field, $\mathbf{E}[U^2V^{2n}]$ involves the field only through its means and covariances, and the $n$th mixed term is then $\mathcal O(\rho^{n+1})$ (Appendix~\ref{app:proof-cor-mar} gives the case $n=1$). For the $(0,n_2)$ truncation, the first omitted denominator term is therefore $\mathcal O(\rho^{n_2+1})$, while the first omitted mixed term, $UV/m_S^2$, is $\mathcal O(\rho^2)$ whatever the value of $n_2$. The larger of the two limits the error, which is the origin of the rate $\rho^{\min(2,n_2+1)}$ in Corollary~\ref{cor:mar-error}. These orders are used below for the large-$N$ rates, although the finite-sample bounds retain the higher moments $\rho_p$ explicitly and do not require part (ii).

A related counting rule is used repeatedly below. For averages of $N$ independent centered quantities with bounded moments, the expectation of a product of $k$ such averages is of order at most $N^{-\lceil k/2\rceil}$, since a term of the expanded product has nonzero expectation only when every site index in it appears at least twice. Products of two first-order fluctuations are therefore of order $N^{-1}$, products of three or four of order $N^{-2}$, and products of five or six of order $N^{-3}$. Under the local-dependence condition, the same ordering applies to products in which the numerator fluctuation appears at most twice, as in all the variance formulas below.

\medskip
The bound rests on an elementary idea. The terms of Eq.~\eqref{eq:p12} carry successive powers of the normalized denominator fluctuation $V/m_S$, whose typical size is $\rho$. If this fluctuation were bounded, say $|V/m_S|\le\tau<1$ on every reporting pattern, the omitted terms of a truncation would form a geometric tail, at most the first omitted term divided by $1-\tau$. Truncating the denominator series after $n_2$ terms would then leave a remainder no larger than
\begin{equation}
\left|
\underbrace{
\frac{m_R}{m_S}\sum_{n>n_2}(-1)^n(V/m_S)^n
}_{\text{omitted denominator tail}}
\right|
\le
\frac{|m_R|}{m_S}
\frac{|V/m_S|^{n_2+1}}{1-\tau},
\label{eq:elementary-tail}
\end{equation}
and, since the $(0,n_2)$ truncation keeps only the $n=0$ mixed term, its omitted mixed tail would be at most $|U||V|/\{m_S^2(1-\tau)\}$. Squaring and averaging would bound the remainder through the mixed moment $\mathbf{E}[U^2V^2]$ and the denominator moment $\rho_{2(n_2+1)}^{\,2(n_2+1)}$.

Under MCAR, however, $|V/m_S|$ is not smaller than one on every reporting pattern. When unusually few sites report, or, for $\alpha<\tfrac12$, unusually many, $S$ departs substantially from its mean and the geometric bound fails. Such patterns are rare when $\rho$ is small. We therefore apply the geometric bound on a good event $G$, where $S$ is close to its mean, and on its complement $G^c$ use only that the field is bounded, so that the rare patterns contribute only through their probability.

\begin{Theorem}[Truncation error controlled by denominator concentration]
\label{thm:rho-error}
Suppose that $|r_i|\le\Lambda$ for all $i$, that $m_S>0$, and that $f=r_*$ on $A_\emptyset$ with $|r_*|\le\Lambda$, so that $|f|\le\Lambda$ everywhere.

Let $P=P_{0,n_2}$ denote the uncentered $(0,n_2)$ partial sum of
Eq.~\eqref{eq:p12}, so that
\[
\mu_2^{(0,n_2)}=\mathrm{Var}(P).
\]
Fix $\tau\in(0,1)$ and define the good event
\[
G:=\{|S-m_S|\le\tau m_S\},
\]
together with the good-event remainder
\[
\delta_G
:=
\left\{\mathbf{E}\big[(f-P)^2\mathbf{1}_G\big]\right\}^{1/2}.
\]
Then
\begin{equation}
\big|\mathrm{Var}(f)-\mu_2^{(0,n_2)}\big|
\le
\underbrace{
2\lVert f-\mathbf{E}f\rVert_2\,\delta_G+\delta_G^2
}_{\text{good-event truncation contribution}}
+
\underbrace{
C_\Lambda\,\mathbf{P}(G^c)^{1/2}
}_{\text{rare-event contribution}},
\label{eq:thm-var-to-delta}
\end{equation}
where
\[
C_\Lambda
:=
4\Lambda\lVert f-P\rVert_2
+
\left\{\mathbf{E}(f-P)^4\right\}^{1/2}.
\]

$C_\Lambda$ is finite for fixed $n_2$ and $m_S>0$, and under MCAR with fixed $\alpha$ and a bounded field it is bounded uniformly in $N$.

The good-event remainder satisfies
\begin{equation}
\delta_G^2
\le
\frac{2}{m_S^2(1-\tau)^2}
\left[
\underbrace{
\frac{\mathbf{E}[U^2V^2]}{m_S^2}
}_{\text{coupling term}}
+
\underbrace{
m_R^2\rho_{2(n_2+1)}^{\,2(n_2+1)}
}_{\text{pure-denominator tail}}
\right].
\label{eq:thm-delta}
\end{equation}
By Chebyshev's inequality, $\mathbf{P}(G^c)\le\rho^2/\tau^2$, and under MCAR, Bernstein's inequality gives the exponential bound
\[
\mathbf{P}(G^c)
\le
2\exp\left[
-\frac{\tau^2m_S^2/2}
{\mathrm{Var}(S)+\tfrac13\tau m_S\max_i\beta_i}
\right].
\]
\end{Theorem}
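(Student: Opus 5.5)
The argument separates an exact variance identity from estimates on the good and bad events. Write $D:=f-P$. Since $\mathrm{Var}(f)=\mathrm{Var}(P+D)$,
\[
\mathrm{Var}(f)-\mathrm{Var}(P)=2\,\mathrm{Cov}(f,D)-\mathrm{Var}(D).
\]
Using $\mathrm{Cov}(f,D)=\mathbf{E}[(f-\mathbf{E}f)D]$ and $\mathrm{Var}(D)\le\mathbf{E}D^2$, this gives
\[
\big|\mathrm{Var}(f)-\mu_2^{(0,n_2)}\big|\le 2\,\big|\mathbf{E}[(f-\mathbf{E}f)D]\big|+\mathbf{E}D^2 .
\]
Each expectation is then split by $\mathbf{1}_G+\mathbf{1}_{G^c}$. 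On $G$, Cauchy--Schwarz gives the bound $2\lVert f-\mathbf{E}f\rVert_2\,\delta_G+\delta_G^2$.

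On $G^c$ only boundedness of $f$ is used. Since $|f-\mathbf{E}f|\le2\Lambda$, Cauchy--Schwarz gives
\[
2\,\big|\mathbf{E}[(f-\mathbf{E}f)D\mathbf{1}_{G^c}]\big|\le 4\Lambda\lVert D\rVert_2\,\mathbf{P}(G^c)^{1/2}.
\]
Likewise $\mathbf{E}[D^2\mathbf{1}_{G^c}]\le\{\mathbf{E}D^4\}^{1/2}\mathbf{P}(G^c)^{1/2}$. Together these give exactly $C_\Lambda\mathbf{P}(G^c)^{1/2}$, which yields \eqref{eq:thm-var-to-delta}.

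Finiteness of $C_\Lambda$ is immediate, because $P$ is a polynomial in the bounded variables $(R,S)$ with $m_S>0$. For uniformity in $N$ under MCAR, I would bound $\lVert D\rVert_4\le\Lambda+\lVert P\rVert_4$. The $n$-th term of $P$ is controlled by $\lVert U\rVert_8\rho_{8n}^n/m_S$ and $|m_R|\rho_{4n}^n/m_S$ via Hölder, as in Lemma~\ref{lem:term-order}(i). These quantities are bounded by Lemma~\ref{lem:term-order}(ii), since $\rho\le\sqrt{(1-\alpha)/\alpha}$, $|U|\le2\Lambda$ and $m_S=\alpha$.

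For \eqref{eq:thm-delta}, work on $G\cap\{S>0\}$. Note $G$ excludes $A_\emptyset$ because $\tau<1$. There $|V/m_S|\le\tau<1$, so the series in Eq.~\eqref{eq:p12} converges pointwise to $f$. Hence $D$ equals the omitted mixed tail plus the omitted denominator tail, exactly as in the discussion preceding the theorem:
\[
D=\frac{U}{m_S}\sum_{n\ge1}(-x)^n+\frac{m_R}{m_S}\sum_{n>n_2}(-x)^n,\qquad x=V/m_S .
\]
The geometric tail bounds give $|D|\le\dfrac{|U||V|}{m_S^2(1-\tau)}+\dfrac{|m_R|\,|x|^{n_2+1}}{m_S(1-\tau)}$. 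Applying $(a+b)^2\le2a^2+2b^2$, multiplying by $\mathbf{1}_G$, dropping the indicator and taking expectations gives the stated bound. The coupling term comes out as $\mathbf{E}[U^2V^2]/m_S^2$, and the denominator term as $m_R^2\,\mathbf{E}|x|^{2(n_2+1)}=m_R^2\rho_{2(n_2+1)}^{2(n_2+1)}$, both with the prefactor $2/\{m_S^2(1-\tau)^2\}$.

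Finally, the tail probabilities. Chebyshev's inequality applied to $S$ gives $\mathbf{P}(|V|>\tau m_S)\le\mathrm{Var}(S)/(\tau m_S)^2=\rho^2/\tau^2$. For Bernstein, write $V=\sum_i\beta_i(s_i-\alpha)$, a sum of independent centered summands bounded by $\max_i\beta_i$ with total variance $\mathrm{Var}(S)$. The two-sided Bernstein inequality at level $t=\tau m_S$ then gives the displayed exponential bound.

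The main obstacle is the uniform-in-$N$ control of $C_\Lambda$. It requires checking that the Hölder bounds from Lemma~\ref{lem:term-order} for $\lVert P\rVert_4$ stay finite as the weights diffuse. Everything else is bookkeeping with Cauchy--Schwarz.
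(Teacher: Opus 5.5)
Your proposal is correct and follows the paper's proof essentially step for step: the identity $\mathrm{Var}(f)-\mathrm{Var}(P)=2\,\mathrm{Cov}(f-P,f)-\mathrm{Var}(f-P)$, the Cauchy--Schwarz split over $G$ and $G^c$, the two geometric tails on $G$, and Chebyshev and Bernstein for $\mathbf{P}(G^c)$. The one difference is how you show $C_\Lambda$ is bounded uniformly in $N$: the paper uses the pointwise bound $|P|\le 2\Lambda/\alpha+\Lambda\sum_{n=0}^{n_2}\alpha^{-n}$, from $|U|\le2\Lambda$, $|V|\le1$, $|m_R|\le\Lambda\alpha$, rather than H\"older and Lemma~\ref{lem:term-order}(ii), so the step you flag as the main obstacle is in fact immediate.
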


The proof is given in Appendix~\ref{app:proof-thm}. The theorem holds for every network size and makes no limit statement. The all-missing event lies in the rare event, $A_\emptyset\subseteq G^c$, since $S=0$ gives $|S-m_S|=m_S>\tau m_S$, so the convention used on $A_\emptyset$ influences the bound only through the rare-event term.

Under MCAR at fixed $\alpha$, Lemma~\ref{lem:term-order}(ii) makes every $\rho_p$ comparable to $\rho$, so that $\rho$ becomes the single small parameter. For increasingly diffuse weights, $W_2\to0$ and hence $\rho\to0$, and for uniform weights the rare-event contribution is moreover exponentially small in $N$, so that the good-event term determines the order of the error. Corollary~\ref{cor:mar-error} states this for uniform weights. Boundedness of the field is used mainly on the rare event $G^c$, and could be relaxed to moment conditions on the field, which we do not pursue here.

\begin{Corollary}[MCAR, uniform weights, and a bounded field]
\label{cor:mar-error}
Suppose reporting is MCAR with fixed availability $0<\alpha<1$, the weights are uniform, $\beta_i=1/N$, and $|r_i|\le \Lambda$. Then, for every fixed $n_2\ge0$,
\[
\big|\mathrm{Var}(f)-\mu_2^{(0,n_2)}\big|
\longrightarrow0
\qquad\text{as }N\to\infty,
\]
without any restriction on the spatial dependence of the field.

If, in addition,
\begin{equation}
\sum_{i\ne j}|\sigma_{ij}|=\mathcal O(N),
\label{eq:local-dep-sec2}
\end{equation}
that is, correlations between sites decay quickly enough with distance that the total covariance grows no faster than the number of sites, then the good-event remainder satisfies
\[
\delta_G
=
\mathcal O\!\left(
\rho^{\min(2,n_2+1)}
\right)
=
\begin{cases}
\mathcal O(\rho)=\mathcal O(N^{-1/2}),&n_2=0,\\[2pt]
\mathcal O(\rho^2)=\mathcal O(N^{-1}),&n_2\ge1.
\end{cases}
\]
The rare-event contribution is exponentially small:
\[
C_\Lambda\,\mathbf{P}(G^c)^{1/2}
=
\mathcal O(\exp(-cN/2))
\]
for some $c=c(\alpha,\tau)>0$. The pure-denominator contribution to $\delta_G^2$ is
\[
\mathcal O\!\left(\rho^{2(n_2+1)}\right).
\]
The corollary holds for every fixed $\alpha\in(0,1)$ and no condition $\alpha>\tfrac12$ is needed here.
\end{Corollary}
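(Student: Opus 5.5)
The plan is to specialize Theorem~\ref{thm:rho-error} term by term. Under MCAR with uniform weights, $m_S=\alpha$, $\mathrm{Var}(S)=\alpha(1-\alpha)/N$, $\max_i\beta_i=1/N$ and $\rho^2=(1-\alpha)/(\alpha N)$. I fix $\tau\in(0,1)$, say $\tau=\tfrac12$, and bound separately the rare-event contribution, the good-event remainder $\delta_G$, and the product $\lVert f-\mathbf{E}f\rVert_2\,\delta_G$. Since $|f|\le\Lambda$, we have $\lVert f-\mathbf{E}f\rVert_2\le 2\Lambda$, so the whole error is $\mathcal O(\delta_G)+C_\Lambda\mathbf{P}(G^c)^{1/2}$.

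\textbf{Rare event.} Bernstein's bound in Theorem~\ref{thm:rho-error} gives the exponent
\[
\frac{\tau^2\alpha^2/2}{\alpha(1-\alpha)/N+\tau\alpha/(3N)}=cN
\]
with $c=c(\alpha,\tau)>0$, so $\mathbf{P}(G^c)^{1/2}\le\sqrt2\,e^{-cN/2}$. It remains to check that $C_\Lambda$ is bounded in $N$. I write $f-P$ with $P=P_{0,n_2}$ a polynomial of degree $n_2$ in $V/m_S$, plus the term $U/m_S$. Since $|U|\le 2\Lambda$ and $|V|\le1$, we get $|P|\le C(\Lambda,\alpha,n_2)$ pointwise. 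Hence $\lVert f-P\rVert_2$ and $\{\mathbf{E}(f-P)^4\}^{1/2}$ are bounded uniformly in $N$, and the rare-event term is $\mathcal O(e^{-cN/2})$.

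\textbf{Good event, no dependence assumption.} I bound the two pieces of Eq.~\eqref{eq:thm-delta}. First, $|m_R|\le\alpha\Lambda$, and Lemma~\ref{lem:term-order}(ii) gives $\rho_{2(n_2+1)}^{2(n_2+1)}\le C\rho^{2(n_2+1)}$, so the pure-denominator tail is $\mathcal O(\rho^{2(n_2+1)})$, which is the stated pure-denominator rate. Second, for the coupling term I use $U^2\le4\Lambda^2$, which gives $\mathbf{E}[U^2V^2]\le4\Lambda^2\mathbf{E}V^2=4\Lambda^2\alpha^2\rho^2$. Thus $\delta_G^2=\mathcal O(\rho^2)$ with no condition on the spatial covariance, and $\delta_G\to0$. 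With the rare-event bound, this proves the first claim. It also gives the $n_2=0$ rate $\delta_G=\mathcal O(\rho)$, since the pure-denominator tail is then also $\mathcal O(\rho^2)$.

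\textbf{Good event, local dependence.} For $n_2\ge1$ I need the sharper bound $\mathbf{E}[U^2V^2]=\mathcal O(N^{-2})$, and this is the main obstacle. My first approach is to write $U=\sum_i\beta_i(s_ir_i-\alpha m_i)$ and $V=\sum_k\beta_k(s_k-\alpha)$, and to expand $\mathbf{E}[U^2V^2]$ as a quadruple sum over $i,j,k,l$. I then use that $\mathbf{s}$ is independent of the field: conditioning on the field, each term is a moment of independent indicators. The term vanishes unless $k$ and $l$ coincide with each other or with one of $i,j$. The counting has three cases.
\begin{itemize}
\item If $k=l\notin\{i,j\}$, the term factors as $\mathbf{E}[U^2]\cdot\alpha(1-\alpha)/N^2$ up to corrections. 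Here $\mathbf{E}U^2=\mathrm{Var}(R)$, and it contains $N^{-2}\sum_{i,j}\alpha^2\sigma_{ij}$ plus diagonal terms of order $N^{-1}$. Under Eq.~\eqref{eq:local-dep-sec2} this is $\mathcal O(N^{-1})$, so the product is $\mathcal O(N^{-2})$.
\item If $k,l\in\{i,j\}$, there are at most $\mathcal O(N^2)$ index tuples with weight $N^{-4}$ and bounded summands, giving $\mathcal O(N^{-2})$.
\item If $k\in\{i,j\}$ and $l=k$ but $i\ne j$, the terms involve the means $m_im_j$. Here I center the field and write $s_ir_i-\alpha m_i=s_i(r_i-m_i)+m_i(s_i-\alpha)$. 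The mean part gives products of four centered indicator sums, which are of order $N^{-2}$ by the counting rule stated after Lemma~\ref{lem:term-order}. The field part involves only $\sigma_{ij}$, and is controlled by $\sum|\sigma_{ij}|=\mathcal O(N)$.
\end{itemize}
I expect the mean-part bookkeeping to need care, and I would defer details to the Appendix~\ref{app:proof-cor-mar} style computation. With $\mathbf{E}[U^2V^2]=\mathcal O(N^{-2})=\mathcal O(\rho^4)$ and a pure-denominator tail of $\mathcal O(\rho^4)$ for $n_2\ge1$, we get $\delta_G^2=\mathcal O(\rho^4)$, so $\delta_G=\mathcal O(\rho^{\min(2,n_2+1)})$. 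Nothing in these steps used $\alpha>\tfrac12$.
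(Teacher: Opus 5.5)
Your proof is correct. Apart from one step it follows the paper's: you specialize Theorem~\ref{thm:rho-error}, obtain $c=\tau^2\alpha/\{2(1-\alpha+\tau/3)\}$ from Bernstein, bound $P_{0,n_2}$ pointwise so that $C_\Lambda$ is uniform in $N$, and use $U^2\le4\Lambda^2$ for the dependence-free claim.

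The genuine difference is the local-dependence bound $\mathbf{E}[U^2V^2]=\mathcal O(N^{-2})$. The paper writes $U=\alpha\xi+\eta$. The field part $\xi$ is independent of $V$, so $\mathbf{E}[\xi^2V^2]=\mathbf{E}\xi^2\,\mathbf{E}V^2$. The reporting part $\eta$ is handled by a conditional fourth-moment count, $\mathbf{E}\eta^4=\mathcal O(N^{-2})$, followed by Cauchy--Schwarz against $\mathbf{E}V^4$.

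You instead expand the quadruple sum, and this is cleaner than you expect. Write $Y_i:=s_ir_i-\alpha m_i$. A term can be nonzero in only two cases. If $k=l\notin\{i,j\}$, it factors exactly as $\alpha(1-\alpha)\mathbf{E}[Y_iY_j]$. If $\{k,l\}\subseteq\{i,j\}$, there are only $\mathcal O(N^2)$ such terms, each bounded by $4\Lambda^2$. Your third bullet is contained in the second, since $k=l\in\{i,j\}$ already satisfies $\{k,l\}\subseteq\{i,j\}$. So no mean-part bookkeeping is needed and nothing has to be deferred; the obstacle you anticipated is not there. Likewise, in the first case $k$ ranges over $N-|\{i,j\}|$ values rather than $N$, and the resulting correction is again $\mathcal O(N^2)$ bounded terms times $N^{-4}$.

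Summing gives the identity $\mathbf{E}[U^2V^2]=\mathrm{Var}(S)\,\mathrm{Var}(R)+\mathcal O(N^{-2})$. The remainder holds for every field bounded by $\Lambda$, whatever its spatial dependence. Your route therefore yields more than an upper bound: spatial dependence enters only through $\mathrm{Var}(R)$. Hence the signed condition $\sum_{i\ne j}\sigma_{ij}=\mathcal O(N)$ is exactly what makes the coupling term in Eq.~\eqref{eq:thm-delta} of order $\rho^4$. This condition is slightly weaker than Eq.~\eqref{eq:local-dep-sec2}, and when $N^{-2}\sum_{i\ne j}\sigma_{ij}$ has a positive limit that term is of exact order $N^{-1}$.

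The paper's decomposition gives only an upper bound. In exchange, it avoids index bookkeeping, separates field randomness from reporting randomness, and is reused in the proof of Proposition~\ref{prop:rho4}.
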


The proof of Corollary~\ref{cor:mar-error} is given in Appendix~\ref{app:proof-cor-mar}.

Corollary~\ref{cor:mar-error} concerns fixed $\alpha$, uniform weights, and large $N$. In this regime $\rho^2=(1-\alpha)/(\alpha N)$ and $\mathbf{P}(G^c)=\mathcal O(\exp(-cN))$, so the denominator concentrates around its mean and, for every fixed $n_2$, the truncation error of the $(0,n_2)$ approximation vanishes as $N\to\infty$. This is a statement about the absolute error. Under local dependence the variance itself decreases as $N^{-1}$, so the relative error need not vanish: that of $(0,0)$ tends to a nonzero limit when $m_r\neq0$ (Section~\ref{sec:numerics}), whereas for $n_2\ge1$ it also vanishes. These statements concern $N\to\infty$ with $\alpha$ fixed, and do not assert uniformity when $\alpha$ varies with $N$.

The rates in the corollary follow from the sizes of the terms noted after Lemma~\ref{lem:term-order}. For $n_2\ge1$ the bound on $\delta_G$ does not improve further with $n_2$, because it is then limited by the omitted mixed terms, which additional denominator corrections do not reduce, and improvement in accuracy strictly requires increasing $n_1$ together with $n_2$ (Remark~\ref{rem:translation}). Together with Theorem~\ref{thm:rho-error} and $\lVert f-\mathbf{E}f\rVert_2=\mathcal O(\rho)$ under local dependence, the corollary gives variance-error bounds of order $\rho^2$ for $(0,0)$ and $\rho^3$ for $(0,n_2)$ with $n_2\ge1$; Proposition~\ref{prop:rho4} sharpens the latter to $\rho^4$ for $(0,1)$.

For $(0,1)$, the algebra of the truncation gives a sharper bound than these general norm estimates.

\begin{Proposition}[Sharper rate for the $(0,1)$ approximation]
\label{prop:rho4}
Suppose reporting is MCAR with uniform weights $\beta_i=1/N$, fixed
$\alpha\in(0,1)$, and a uniformly bounded field, $|r_i|\le \Lambda$. Assume
also the local-dependence condition Eq.~\eqref{eq:local-dep-sec2}. Then
\[
\rho^2=\frac{1-\alpha}{\alpha N}
\]
and, as $N\to\infty$, the $(0,1)$ variance approximation satisfies
\[
\big|\mathrm{Var}(f)-\mu_2^{(0,1)}\big|
=
\mathcal O(\rho^4)
=
\mathcal O(N^{-2}).
\]
Under the same assumptions, the $(0,0)$ approximation satisfies
\[
\big|\mathrm{Var}(f)-\mu_2^{(0,0)}\big|
=
\mathcal O(\rho^2)
=
\mathcal O(N^{-1}).
\]
The implied constants are independent of $N$, but may depend on $\Lambda$, $\alpha$, and the constant in Eq.~\eqref{eq:local-dep-sec2}.
\end{Proposition}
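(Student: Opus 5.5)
The plan is to avoid the general norm estimates of Theorem~\ref{thm:rho-error} and instead use an exact algebraic identity for the $(0,1)$ truncation, so that the error becomes explicit low-order moments plus a remainder carrying three powers of the denominator fluctuation. With $m_S=\alpha$ and $m_R=\alpha m_r$, the $(0,1)$ partial sum is
\[
P_{0,1}=m_r+\frac{U}{\alpha}-\frac{m_rV}{\alpha}=m_r+\frac{D}{\alpha},
\qquad D:=R-m_rS=\frac1N\sum_i s_i y_i,\quad y_i:=r_i-m_r .
\]
Hence $\mathbf{E}D=0$ and $\mu_2^{(0,1)}=\mathbf{E}D^2/\alpha^2$. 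On $\{S>0\}$ the exact estimator satisfies $f-m_r=D/S=(D/\alpha)(1+x)^{-1}$ with $x=V/\alpha$.

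\textbf{Step 1: split off the rare event.} Set $G=\{|x|\le\tau\}$. On $G^c$, which contains $A_\emptyset$, the field bound gives $|f|\le\Lambda$ and $|D/\alpha|\le 2\Lambda/\alpha$. By the Bernstein bound of Theorem~\ref{thm:rho-error}, every contribution from $G^c$ is $\mathcal O(e^{-cN})$. It therefore suffices to control
\[
\mathrm{Var}(f)-\mu_2^{(0,1)}=\alpha^{-2}\,\mathbf{E}\big[D^2\{(1+x)^{-2}-1\}\mathbf 1_G\big]-\big(\mathbf{E}[(f-m_r)\mathbf 1_G]\big)^2+\mathcal O(e^{-cN}).
\]

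\textbf{Step 2: expand on $G$.} On $G$ we have $(1+x)^{-2}=1-2x+3x^2+\mathcal O(|x|^3)$ and $(1+x)^{-1}=1-x+\mathcal O(x^2)$, with constants depending only on $\tau$. The quantities to bound are then $\mathbf{E}[D^2x]$, $\mathbf{E}[D^2x^2]$, $\mathbf{E}[D^2|x|^3]$, and the square of $\mathbf{E}[Dx]+\mathcal O(\mathbf{E}|D|x^2)$. For the last, note that $\mathbf{E}[Dx]=\mathcal O(N^{-1})$ and $\mathbf{E}|D|x^2=\mathcal O(N^{-3/2})$, so its square is $\mathcal O(N^{-2})$.

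\textbf{Step 3: the moment counting.} This is the key step and the main obstacle. Because reporting is independent of the field, I will condition on $\mathbf s$ and use
\[
\mathbf{E}[y_iy_j]=\sigma_{ij}+a_ia_j,\qquad a_i:=m_i-m_r .
\]
The $\sigma_{ij}$ parts are controlled by Eq.~\eqref{eq:local-dep-sec2}. The mean parts $a_ia_j$ are not locally summable, but $\sum_ia_i=0$ for uniform weights. This gives two useful facts:
\[
\sum_{i\ne j}a_ia_j=-\sum_i a_i^2=\mathcal O(N),
\qquad
\sum_i s_ia_i=\sum_i(s_i-\alpha)a_i ,
\]
so the second is a centered independent sum.

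With these, the counting rule stated after Lemma~\ref{lem:term-order} gives $\mathbf{E}[D^2x]=\mathcal O(N^{-2})$: a term $s_is_j(s_k-\alpha)$ needs $k\in\{i,j\}$, which leaves a factor $N^{-3}$ times an $\mathcal O(N)$ covariance sum. In the same way $\mathbf{E}[D^2x^2]=\mathcal O(N^{-2})$.

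For the cubic remainder I will not use a fourth moment of the field, which local dependence does not control. Instead I use $\mathbf{E}[D^2\mid\mathbf s]\le N^{-2}\sum|\sigma_{ij}|+N^{-2}(\sum_i(s_i-\alpha)a_i)^2$. Combined with Cauchy--Schwarz and Lemma~\ref{lem:term-order}(ii), this gives
\[
\mathbf{E}[D^2|x|^3]\le \mathcal O(N^{-1})\,\mathbf{E}|x|^3+N^{-2}\,\Big\|\sum_i(s_i-\alpha)a_i\Big\|_4^2\,\rho_6^3=\mathcal O(N^{-5/2}).
\]
Collecting Steps 1--3, all terms are $\mathcal O(N^{-2})=\mathcal O(\rho^4)$. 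I expect the delicate points to be checking that the $\mathbf{E}[D^2x]$ contribution is genuinely $\mathcal O(N^{-2})$, and not $\mathcal O(N^{-1})$ through the mean parts, and the care needed with the diagonal terms.

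\textbf{Step 4: the $(0,0)$ claim.} This follows from Theorem~\ref{thm:rho-error} and Corollary~\ref{cor:mar-error} with $n_2=0$. There $\delta_G=\mathcal O(\rho)$, and $\lVert f-\mathbf{E}f\rVert_2=\mathcal O(\rho)$ under local dependence (by Steps 2--3, since $\mathbf{E}D^2=\mathcal O(N^{-1})$). The good-event contribution is therefore $\mathcal O(\rho^2)$ and the rare-event contribution is exponentially small.

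Alternatively, one can argue directly. Since $\mu_2^{(0,0)}=\mathrm{Var}(R)/\alpha^2$ and $R=D+m_rS$, the difference from $\mu_2^{(0,1)}$ is
\[
\alpha^{-2}\big(2m_r\,\mathbf{E}[DV]+m_r^2\,\mathrm{Var}(S)\big)=\mathcal O(N^{-1}),
\]
which combined with the $(0,1)$ result gives the same rate.
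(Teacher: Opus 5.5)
Your proof is correct. It shares the paper's skeleton: a good event $G$ on which the $(0,1)$ remainder is written exactly with one extra factor of $V$, a Bernstein bound that makes everything on $G^c$ exponentially small, and a single delicate third-order moment that must be shown to be $\mathcal O(N^{-2})$ rather than $\mathcal O(N^{-1})$.

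What differs is the bookkeeping. The paper uses $\mathrm{Var}(f)-\mu_2^{(0,1)}=2\mathbf{E}[L\mathcal R]+\mathrm{Var}(\mathcal R)$. It imports the closed forms of $\mathbf{E}[U^2V]$, $\mathbf{E}[UV^2]$ and $\mathbf{E}V^3$ from the Supplementary Information, and bounds $\mathbf{E}[U^2V^2]$ through the split $U=\alpha\xi+\eta$. Your centering at $m_r$ shows that $L=D/\alpha$ and $A_0=-xD/\alpha$. Hence $\mathbf{E}[LA_0]=-\mathbf{E}[D^2x]/\alpha^2$, and the paper's three moments collapse into one, which you compute directly by conditioning on $\mathbf s$. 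The mean heterogeneity is then handled by $\sum_ia_i=0$, rather than by bounding $M_3$ and $H_3$ separately. This works because the pair coefficient is the same for all $i\neq j$, which is where uniform weights and a common $\alpha$ enter. Your version is shorter, self-contained, and visibly shift invariant, since only $y_i=r_i-m_r$ appears. The paper's version keeps the general moments, which it reuses in Remark~\ref{rem:hierarchy} to explain the role of $W_3$ for concentrated weights.

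Two simplifications are available to you:
\begin{enumerate}
\item On $G$, $|(1+x)^{-2}-1+2x|=x^2|3+2x|/(1+x)^2\le C_\tau x^2$. So once $\mathbf{E}[D^2x^2]=\mathcal O(N^{-2})$, you do not need the cubic remainder or the bound on $\mathbf{E}[D^2|x|^3]$.
\item $\mathbf{E}[DV]=\alpha(1-\alpha)N^{-2}\sum_ia_i=0$ exactly, so the squared-mean term is smaller than you state.
\end{enumerate}
The second point also sharpens your alternative argument for $(0,0)$. The gap is exactly $\mu_2^{(0,0)}-\mu_2^{(0,1)}=\rho^2m_r^2$, as in Eq.~\eqref{eq:translation-gap}. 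Hence $\mathrm{Var}(f)-\mu_2^{(0,0)}=-\rho^2m_r^2+\mathcal O(\rho^4)$, and the rate $\rho^2$ is attained whenever $m_r$ stays away from zero. The paper, by contrast, only bounds $\mathrm{Var}(f)$ and $\mu_2^{(0,0)}$ separately by $\mathcal O(N^{-1})$.
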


The proof of Proposition~\ref{prop:rho4} is given in Appendix~\ref{app:proof-rho4}. Thus, at the level of these upper bounds, retaining the first denominator correction improves the order of the truncation error in variance from $\mathcal O(\rho^2)$ to $\mathcal O(\rho^4)$, equivalently from $\mathcal O(N^{-1})$ to $\mathcal O(N^{-2})$, which helps explain the good accuracy of the estimator of \cite{Seshadri2018}. The reason for the improvement is visible in an exact identity. Wherever $S>0$,
\[
f-P_{0,1}=\frac{-UV/m_S^2+m_RV^2/m_S^3}{1+V/m_S}.
\]
The remainder is therefore of second order in the fluctuations, whereas $P_{0,1}$ itself is of first order. The error in the variance involves the covariance of the two, an average of a product of three fluctuations, and the variance of the remainder, an average of a product of four, and by the counting rule both are of order $N^{-2}$. For $(0,0)$ the remainder contains the first-order term $-m_RV/m_S^2$, and unless $m_R=0$ the error is of order $N^{-1}$.

Condition~\eqref{eq:local-dep-sec2} is sufficient but not necessary for the rate $\rho^4$. Under persistent spatial dependence, where $\sum_{i\ne j}|\sigma_{ij}|$ grows faster than $N$, the error of $(0,1)$ is instead of order $\rho^2$ for uniform weights (Proposition~\ref{prop:uniform-decomp}), although Corollary~\ref{cor:mar-error}, which requires only a bounded field, still holds.

\begin{remark}[Heterogeneous independent reporting]
\label{prop:het-alpha}
Suppose the reporting indicators are mutually independent and independent of the field, with $s_i\sim\mathrm{Bernoulli}(\alpha_i)$, where the site-specific reporting probabilities $\alpha_i$ need not be equal, and $m_S=\sum_i\beta_i\alpha_i>0$. Then $\mathrm{Var}(S)=\sum_i\beta_i^2\alpha_i(1-\alpha_i)$, and
\begin{equation}
\rho^2
=
\frac{\sum_i\beta_i^2\alpha_i(1-\alpha_i)}{\left(\sum_i\beta_i\alpha_i\right)^2},
\label{eq:rho-het}
\end{equation}
which reduces to $\rho^2=\tfrac{1-\alpha}{\alpha}W_2$ when $\alpha_i\equiv\alpha$. Theorem~\ref{thm:rho-error} holds with these values of $m_S$ and $\mathrm{Var}(S)$; in particular, $\mathbf{P}(G^c)\le\rho^2/\tau^2$, and the Bernstein bound applies with $X_i:=\beta_i(s_i-\alpha_i)$. If the reporting probabilities depend on observed covariates and, conditional on those covariates, the reporting indicators are independent of one another and of the unobserved field values, the same formulas and bounds apply with all probabilities and moments interpreted conditionally. This gives the natural missing-at-random analogue of the MCAR setting.
\end{remark}

Table~\ref{tab:three-results} summarizes the three results of this section and the intuition behind each. Section~\ref{sec:truncated} derives the low-order approximations explicitly under MCAR, and Section~\ref{sec:numerics} compares them with exact calculations.

\begin{table}[t]
\centering
\small
\caption{The three questions of Section~\ref{sec:moments}: the result for each, and the intuition behind it.}
\label{tab:three-results}
\renewcommand{\arraystretch}{1.3}
\begin{tabular}{>{\raggedright\arraybackslash}p{0.15\linewidth} >{\raggedright\arraybackslash}p{0.29\linewidth} >{\raggedright\arraybackslash}p{0.29\linewidth} >{\raggedright\arraybackslash}p{0.16\linewidth}}
\hline
Question & Result & Intuition & Where \\
\hline
Convergence: more terms, fixed $N$ &
Under MCAR, the truncated variances converge only if $\alpha>\tfrac12$ and $n_2-n_1$ is held fixed, with the limit being the exact variance with the value $(n_2-n_1)m_r$ on $A_\emptyset$ &
The ratio is a geometric series in $x=V/m_S$, which needs $|x|<1$ on every pattern, and $|x|=(1-\alpha)/\alpha$ when all sites report; on $A_\emptyset$ the partial sums equal $(n_2-n_1)m_r$ &
Proposition~\ref{prop:var-conv}, Remark~\ref{rem:trunc-conv}, Table~\ref{tab:convergence} \\
Truncation error: given truncation, given $N$ &
For $(0,n_2)$, bounded at every $N$ through $\rho$, the higher moments $\rho_p$, and $\mathbf{P}(G^c)\le\rho^2/\tau^2$, which is exponentially small under MCAR with diffuse weights, whether or not the series converges &
Where $S$ is near its mean, the omitted terms form a geometric tail; the remaining patterns are rare and contribute only through their probability &
Theorem~\ref{thm:rho-error} \\
Fixed-order accuracy: given truncation, $N\to\infty$ &
For uniform weights and a bounded field, every $(0,n_2)$ error vanishes at any fixed $\alpha$; under local dependence it is $\mathcal O(\rho^2)$ for $(0,0)$ and $\mathcal O(\rho^4)$ for $(0,1)$ &
Each denominator order adds a factor $\rho$, and a mixed term one more under local dependence; $(0,0)$ omits the first-order term $-m_RV/m_S^2$, which $(0,1)$ keeps &
Lemma~\ref{lem:term-order}, Corollary~\ref{cor:mar-error}, Proposition~\ref{prop:rho4} \\
\hline
\end{tabular}
\end{table}

\section{Truncated variance approximations and their properties}
\label{sec:truncated}

Section~\ref{sec:moments} defined the truncations $\mu_2^{(n_1,n_2)}$ through Eq.~\eqref{eq:sec5-center} and bounded the truncation error of the $(0,n_2)$ family. This section establishes explicit formulas for the low-order truncations under MCAR, identifies the shift-invariant truncations as those free of a spurious dependence on the mean level, derives the exact variance for uniform weights and its decomposition, and explains why higher truncations need not improve on the first-order approximation, with Table~\ref{tab:truncation-properties} summarizing the main results.

The formulas contain three kinds of contribution: (i) numerator variability, through $\mathrm{Var}(R)$; (ii) the leading effect of a random denominator, through $\mathrm{Var}(S)$ and $\mathrm{Cov}(R,S)$; and (iii) higher-order numerator-denominator terms, through $\mu_{2,1}=\mathbf{E}[U^2V]$, $\mu_{1,2}=\mathbf{E}[UV^2]$, and $\kappa=\mathrm{Var}(UV)$. Under MCAR each is a weighted combination of the first and second moments of the field.

% ============================================================
\subsection{Variance formulas for the truncations: $(0,0)$, $(0,1)$, $(1,0)$, $(1,1)$}
\label{sec:truncated-variance}

The low-order truncations are sums of the building blocks
\[
A := \frac{U}{m_S},\qquad
B := -\,\frac{m_R}{m_S^2}\,V,\qquad
D := -\,\frac{UV-\mathbf{E}[UV]}{m_S^2},
\]
where term $A$ is the fluctuation obtained if the denominator is held at its mean, and therefore represents the ordinary variability of the weighted numerator, $B$ is the first correction for a random denominator, and $D$ is the leading coupling between numerator and denominator. Supplementary Information Section~S1 details the algebra, and the resulting first four truncations are
\begin{align}
\mu_2^{(0,0)} &= \mathbf{E}A^2
 \;=\; \frac{\mathrm{Var}(R)}{m_S^2}, \label{eq:var00}\\
\mu_2^{(0,1)} &= \mathbf{E}(A+B)^2
 \;=\; \frac{\mathrm{Var}(R)}{m_S^2}
      + \frac{m_R^2}{m_S^4}\,\mathrm{Var}(S)
      - \frac{2m_R}{m_S^3}\,\mathrm{Cov}(R,S), \label{eq:var01}\\
\mu_2^{(1,0)} &= \mathbf{E}(A+D)^2
 \;=\; \frac{\mathrm{Var}(R)}{m_S^2}
      - \frac{2}{m_S^3}\,\mu_{2,1}
      + \frac{1}{m_S^4}\,\kappa,
\label{eq:var10}\\
\mu_2^{(1,1)} &= \mathbf{E}(A+B+D)^2 \nonumber\\
&= \frac{\mathrm{Var}(R)}{m_S^2}
 + \frac{m_R^2}{m_S^4}\,\mathrm{Var}(S)
 - \frac{2m_R}{m_S^3}\,\mathrm{Cov}(R,S)
 - \frac{2}{m_S^3}\,\mu_{2,1}
 + \frac{2m_R}{m_S^4}\,\mu_{1,2}
 + \frac{1}{m_S^4}\,\kappa,
\label{eq:var11}
\end{align}
where
\[
\mu_{2,1}:=\mathbf{E}[U^2V],\qquad
\mu_{1,2}:=\mathbf{E}[UV^2],\qquad
\kappa:=\mathrm{Var}(UV)
=\mathbf{E}[U^2V^2]-\big(\mathbf{E}[UV]\big)^2
=\mathbf{E}[U^2V^2]-\big(\mathrm{Cov}(R,S)\big)^2.
\]

% ============================================================
\subsection{Variance approximations under MCAR}
\label{sec:mar-variance}

Under MCAR,
$s_i\overset{\mathrm{i.i.d.}}{\sim}\mathrm{Bernoulli}(\alpha)$,
independent of $\{r_i\}$, and $\sum_i\beta_i=1$.

Let
\[
m_i:=\mathbf{E}r_i,\qquad
\sigma_i^2:=\mathrm{Var}(r_i),\qquad
\sigma_{ij}:=\mathrm{Cov}(r_i,r_j),\qquad
W_2:=\sum_i \beta_i^2,
\]
and define the weighted mean and the mean-weight moments
\[
m_r:=\sum_i\beta_i m_i,\qquad
M_2:=\sum_i\beta_i^2 m_i,\qquad
M_k:=\sum_i\beta_i^k m_i.
\]
Here $m_r$ is the weighted spatial mean of the site means, whereas $m_R=\mathbf{E}R=\alpha m_r$ is the mean of the reported numerator. Supplementary Information Section~S1 derives the basic moments under MCAR
\begin{equation}
m_S=\alpha,\qquad
\mathrm{Var}(S)=\alpha(1-\alpha)W_2,\qquad
m_R=\alpha\,m_r,\qquad
\mathrm{Cov}(R,S)=\alpha(1-\alpha)M_2.
\label{eq:sec5-MAR-basic}
\end{equation}
The variance of the numerator then has three contributions, from the variance of the field, its covariance, and the mean, each affected by the availability $\alpha$. These formulas reproduce the results of \cite{Seshadri2018} (in the notation of that paper, $\mathrm{Var}(R)\leftrightarrow\sigma_R^2$, $\mathrm{Var}(S)\leftrightarrow\sigma_S^2$, and $\mathrm{Cov}(R,S)\leftrightarrow\sigma_{RS}^2$), and their development is detailed in Supplementary Information Section~S1.
\begin{equation}
\mathrm{Var}(R)
=\alpha\sum_i\beta_i^2\sigma_i^2
+\alpha^2\!\!\sum_{i\ne j}\beta_i\beta_j\sigma_{ij}
+\alpha(1-\alpha)\sum_i\beta_i^2 m_i^2.
\label{eq:sec5-VarR}
\end{equation}

\paragraph{(0,0) and (0,1) under MCAR.}
Substituting Eqs.~\eqref{eq:sec5-MAR-basic}-\eqref{eq:sec5-VarR} into Eqs.~\eqref{eq:var00}--\eqref{eq:var01} yields
\begin{align}
\mu_2^{(0,0)}
&= \frac{1}{\alpha}\sum_i\beta_i^2\sigma_i^2
+ \sum_{i\ne j}\beta_i\beta_j\sigma_{ij}
+ \frac{1-\alpha}{\alpha}\sum_i\beta_i^2 m_i^2,
\label{eq:mar00}\\[0.35em]
\mu_2^{(0,1)}
&= \mu_2^{(0,0)}
+ \frac{1-\alpha}{\alpha}\Big(m_r^{\,2}W_2-2m_rM_2\Big)
\label{eq:mar01a}\\
&= \frac{1}{\alpha}\sum_i\beta_i^2\sigma_i^2
+ \sum_{i\ne j}\beta_i\beta_j\sigma_{ij}
+ \frac{1-\alpha}{\alpha}\Big(\sum_i\beta_i^2 m_i^2+m_r^{\,2}W_2-2m_rM_2\Big).
\label{eq:mar01}
\end{align}
The $(0,0)$ approximation contains the intrinsic field variance and covariance together with a contribution from the site means. Under MCAR, the diagonal variance term is inflated by the factor $1/\alpha$, whereas the off-diagonal spatial-covariance term is not. A single site enters the numerator only when it reports, with probability $\alpha$, whereas a pair of sites enters only when both report, with probability $\alpha^2$, and normalization by $(\mathbf{E}S)^2=\alpha^2$ cancels the latter but not the former. The additional mean term arises from uncertainty about which sites report. The $(0,1)$ approximation adds the leading linear denominator-variability term
$m_R^2\mathrm{Var}(S)/m_S^4$ and the leading covariance correction $-2m_R\mathrm{Cov}(R,S)/m_S^3$.

The two leading approximations have a simpler form that shows their structure directly. Since $\sum_i\beta_i^2m_i^2+m_r^2W_2-2m_rM_2=\sum_i\beta_i^2(m_i-m_r)^2$,
\begin{align}
\mu_2^{(0,0)} &= \sum_{i,j}\beta_i\beta_j\sigma_{ij}
+\frac{1-\alpha}{\alpha}\sum_i\beta_i^2\left(\sigma_i^2+m_i^2\right),
\label{eq:mar00-compact}\\
\mu_2^{(0,1)} &= \sum_{i,j}\beta_i\beta_j\sigma_{ij}
+\frac{1-\alpha}{\alpha}\sum_i\beta_i^2\left\{\sigma_i^2+(m_i-m_r)^2\right\},
\label{eq:mar01-compact}
\end{align}
with $\sigma_{ii}=\sigma_i^2$. The first term is $\mathrm{Var}(\bar r)$, the variance of the average with complete reporting, $\bar r=\sum_i\beta_ir_i$, and the second approximates the contribution of missing data, $\mathrm{Var}(\widehat r)-\mathrm{Var}(\bar r)$, which grows in proportion to $(1-\alpha)/\alpha$. The two approximations differ only in the reference point for the second moment at each site: $(0,0)$ measures departures of the site values from zero, whereas $(0,1)$ measures departures from the weighted spatial mean $m_r$. This is why a common shift of the field changes $\mu_2^{(0,0)}$ but not $\mu_2^{(0,1)}$, and why the mean-induced term vanishes at $(0,1)$ when all site means are equal. Equivalently, $(0,1)$ is $(0,0)$ applied to the field after its weighted mean $m_r$ has been subtracted from every site. Since the exact variance is unchanged by this common shift, centering the field in this way is the simplest route to a shift-invariant approximation. Equation~\eqref{eq:mar01-compact} is Eq.~(25) of \cite{Seshadri2018}, written there in matrix form, in Eqs.~(26)--(27), as $\beta^{\mathsf T}C\beta$ with $C=S_r+\tfrac{1-\alpha}{\alpha}\left(S_r^{d}+F_r\right)$.

The difference between the two matters when the mean level is large compared with the variability of the field. In a small example with $N=4$ i.i.d.\ sites, $m=2\sigma$, and $\alpha=0.5$ (Supplementary Information Section~S1), $(0,1)$ is within about $7\%$ of the exact variance, whereas $(0,0)$ is nearly three times too large, even though $\rho^2=0.25$ is not small.

\paragraph{Mixed moments under MCAR.}
Under MCAR, the mixed moments $\mu_{1,2}$, $\mu_{2,1}$, and $\kappa$ that complete the formulas for $(1,0)$ and $(1,1)$ also have closed forms involving the first and second moments of the field. These formulas can be derived by separating the reporting indicators from the field and counting the distinct sites in each product, and are derived in Supplementary Information Section~S1; for example, $\mu_{1,2}=\alpha(1-\alpha)(1-2\alpha)\sum_i\beta_i^3m_i$. For uniform weights and under the local-dependence condition Eq.~\eqref{eq:local-dep-sec2}, all three are of order $N^{-2}$, and why the corrections they add need not improve on $(0,1)$ is discussed in Remark~\ref{rem:hierarchy}. Supplementary Information Table~S2 collects the low-order approximations together with their MCAR specializations.

\subsection{Invariance under a common shift}
\label{sec:shift}

The behavior of $(0,0)$ and $(0,1)$ under a common shift of the field, noted after Eq.~\eqref{eq:mar01-compact}, is a special case of a general property that determines which truncations are consistent with the exact variance.

\begin{remark}[Shift-invariant truncations]
\label{rem:translation}
Adding a constant $b$ to every field value ($r_i\mapsto r_i+b$, with $r_*\mapsto r_*+b$) shifts $\widehat r$ by exactly $b$, so $\mathrm{Var}(\widehat r)$ is unchanged. Under this shift $U\mapsto U+bV$ and $m_R\mapsto m_R+b\,m_S$. Writing $x=V/m_S$, the mixed series therefore gains $-b\sum_{j=1}^{n_1+1}(-x)^j$ and the denominator series gains $b\sum_{j=0}^{n_2}(-x)^j$. The two additions sum to the constant $b$, which centering removes, if and only if $n_2=n_1+1$. Counting $U$ and $V$ as first-order fluctuations, the $n$th mixed term is of order $n+1$ and the $n$th denominator term of order $n$. The truncations $(0,1)$, $(1,2)$, $(2,3),\ldots$ are therefore exactly those that keep both families to the same total order $n_1+1$, as in a classical multivariate delta-method expansion, and we refer to these as the shift-invariant truncations. All other truncations, including $(0,0)$, $(1,0)$, $(1,1)$, and $(0,2)$, change under the shift by amounts that grow quadratically in $b$, equivalently in the mean level. For uniform weights, fixed $\alpha$, and a bounded field, these additional terms are of order $m_r^2N^{-1}$ for $(0,0)$ and $(1,0)$, and of order $m_r^2N^{-2}$ for $(1,1)$ and $(0,2)$. Invariance under the shift does not by itself make a truncation more accurate (Fig.~\ref{fig:translation}).

The shift-invariant truncations have two further properties. On the all-missing event the partial sums $P_{n_1,n_1+1}$ take the value $m_r$ at every order, so they correspond to the convention $r_*=m_r$ used in Eq.~\eqref{eq:exact-uniform-var}, and for $\alpha>\tfrac12$ they converge at fixed $N$ (Remark~\ref{rem:trunc-conv}). Along this sequence the order of the variance error improves only at every second step. Heuristically, the error of the truncation of total order $K=n_1+1$ is governed by averages of products of $K+2$ centered fluctuations, which are of order $N^{-\lceil (K+2)/2\rceil}$ under local dependence (the counting rule of Section~\ref{sec:accuracy}). For the homogeneous benchmark of Section~\ref{sec:numerics} at $\alpha=0.3$, exact calculations (Supplementary Information Table~S3) give errors that decrease as $N^{-2}$ for $(0,1)$ and $(1,2)$, as $N^{-3}$ for $(2,3)$ and $(3,4)$, and as $N^{-4}$ for $(4,5)$.
\end{remark}

% ============================================================
\subsection{Further extension of the hierarchy: the \texorpdfstring{$(0,2)$}{(0,2)} truncation}
\label{sec:02-20}

Increasing $n_2$ adds terms in higher powers of the denominator fluctuation, which carry the mean level $m_R$, whereas increasing $n_1$ adds terms in which numerator fluctuations interact with higher powers of the denominator fluctuation. The $(0,2)$ truncation illustrates the first of these effects.

The $(0,2)$ truncation adds the next denominator-only term, $\mathcal T_{0,2}[f]=A+B+B_2$ with
\[
B_2 := \frac{m_R}{m_S^3}\big(V^2-\mathbf{E}V^2\big),
\]
leading to
\begin{align}
\mu_2^{(0,2)}
&= \mu_2^{(0,1)}
 \;+\; \frac{2m_R}{m_S^4}\,\mu_{1,2}
 \;-\; \frac{2m_R^2}{m_S^5}\,\mathrm{Cov}(V,V^2)
 \;+\; \frac{m_R^2}{m_S^6}\,\mathrm{Var}(V^2).
\label{eq:var02-general}
\end{align}
Under MCAR with uniform weights, the $\mu_{1,2}$ and $\mathrm{Cov}(V,V^2)$ contributions cancel exactly (Supplementary Information Section~S1), and Eq.~\eqref{eq:var02-general} reduces to
\begin{equation}
\mu_2^{(0,2)}=\mu_2^{(0,1)}+\frac{m_r^2}{\alpha^4}\,\mathrm{Var}(V^2)
\qquad(\beta_i=1/N).
\label{eq:var02-uniform}
\end{equation}

Under MCAR with uniform weights, $\mathrm{Var}(V^2)=\mathcal O(W_2^2)=\mathcal O(N^{-2})$, whereas under local dependence the leading terms of $(0,0)$ and $(0,1)$ are of order $N^{-1}$. The $(0,2)$ correction is therefore appreciable mainly when $N$ or $\alpha$ is small, and whether it improves the approximation depends on the mean level. The surviving term $m_r^2\mathrm{Var}(V^2)/\alpha^4$ grows as $m_r^2$, whereas the exact variance is unchanged by a common shift of the field. It is therefore an artifact of truncation rather than a physical effect of the mean level, and a large mean level relative to the variability of the field degrades $(0,2)$ (Remark~\ref{rem:translation}).

% ============================================================
\subsection{Uniform weights: exact variance, and what \texorpdfstring{$(0,1)$}{(0,1)} omits}
\label{sec:exact-uniform}

With uniform weights, $\beta_i=1/N$, the weighted mean $m_r$ is the average of the site means, and the formulas are expressed through the spread of the site means,
\[
\mathrm{Var}_m:=\frac{1}{N}\sum_{i=1}^N (m_i-m_r)^2,
\]
together with the sums
\[
\Sigma_\sigma:=\sum_{i=1}^N \sigma_i^2,
\qquad
\Sigma_{\mathrm{off}}:=\sum_{i\neq j}\sigma_{ij},
\qquad
\Sigma_m:=\sum_{i=1}^N m_i^2.
\]

With these, and since $\mathrm{Var}(\bar r)=(\Sigma_\sigma+\Sigma_{\mathrm{off}})/N^2$, the expressions in Eqs.~\eqref{eq:mar00-compact}-\eqref{eq:mar01-compact} yield
\[
\mu_2^{(0,0)}=\mathrm{Var}(\bar r)+\rho^2\Big(\frac{\Sigma_\sigma}{N}+\frac{\Sigma_m}{N}\Big),
\qquad
\mu_2^{(0,1)}=\mathrm{Var}(\bar r)+\rho^2\Big(\frac{\Sigma_\sigma}{N}+\mathrm{Var}_m\Big).
\]
Since $\Sigma_m/N-\mathrm{Var}_m=m_r^2$, the two differ by the mean-induced term $\rho^2m_r^2$. Proposition~\ref{prop:uniform-decomp} below shows that the exact variance has the same structure, with $\rho^2$ replaced by $\gamma_N$ and the bracket by the expected spatial variance of a single realization, $\mathbf{E}s^2=\Sigma_\sigma/N+\mathrm{Var}_m-\mathrm{Var}(\bar r)$.

The closed forms for independent and for i.i.d.\ sites, including those of the mixed moments, are collected in Supplementary Information Table~S1.

The MCAR variance can then be evaluated exactly, without Monte Carlo simulation. Let $K=\sum_i s_i\sim\mathrm{Binomial}(N,\alpha)$ be the number of reporting sites. Conditional on $K=k>0$, every subset of $k$ sites has the same probability, and one obtains
\begin{equation}
\mathrm{Var}(\widehat r)
=
\sum_{k=1}^{N}
\binom Nk\alpha^k(1-\alpha)^{N-k}
\left[
\frac{\Sigma_\sigma}{kN}
+
\frac{k-1}{kN(N-1)}\Sigma_{\mathrm{off}}
+
\frac{N-k}{(N-1)k}\mathrm{Var}_m
\right],
\label{eq:exact-uniform-var}
\end{equation}
where on $K=0$ we use $\widehat r=m_r$. Here ``exact'' means that the average over missing data is computed exactly for given field moments; uncertainty in estimating those moments is a separate problem (Section~\ref{sec:discussion}).

Equation~\eqref{eq:exact-uniform-var} can be rearranged to show where the variance comes from, and what the $(0,1)$ approximation misses. Conditional on $K=k$ and on the field, $\widehat r$ is the mean of a sample of $k$ sites drawn without replacement from $N$. It is therefore centered on $\bar r$, and its variance is the finite-population sampling variance $\frac{N-k}{(N-1)k}\,s^2$, where $s^2=N^{-1}\sum_i(r_i-\bar r)^2$ is the spatial variance of the realization. Averaging over $K$ and over the field separates the variability of $\bar r$ from that due to reporting.

\begin{Proposition}[Exact decomposition for uniform weights]
\label{prop:uniform-decomp}
Suppose reporting is MCAR with uniform weights, and that $\widehat r=m_r$ on $A_\emptyset$. Let $\bar r=N^{-1}\sum_ir_i$ be the average with complete reporting, so that $\mathrm{Var}(\bar r)=(\Sigma_\sigma+\Sigma_{\mathrm{off}})/N^2$, and let
\[
\mathbf{E}s^2:=\frac1N\sum_i\mathbf{E}(r_i-\bar r)^2=\frac{\Sigma_\sigma}{N}+\mathrm{Var}_m-\mathrm{Var}(\bar r)
\]
be the expected spatial variance of a single realization of the field. Then
\begin{equation}
\mathrm{Var}(\widehat r)=\big(1-(1-\alpha)^N\big)\mathrm{Var}(\bar r)+\gamma_N\,\mathbf{E}s^2,
\qquad
\gamma_N:=\sum_{k=1}^N\binom Nk\alpha^k(1-\alpha)^{N-k}\frac{N-k}{(N-1)k},
\label{eq:uniform-decomp}
\end{equation}
where $\gamma_N=\rho^2+\mathcal O(N^{-2})$ at fixed $\alpha$. Since $\mu_2^{(0,1)}=\mathrm{Var}(\bar r)+\rho^2\big(\mathbf{E}s^2+\mathrm{Var}(\bar r)\big)$,
\begin{equation}
\mu_2^{(0,1)}-\mathrm{Var}(\widehat r)
=\rho^2\,\mathrm{Var}(\bar r)-(\gamma_N-\rho^2)\,\mathbf{E}s^2+(1-\alpha)^N\mathrm{Var}(\bar r).
\label{eq:01-error-uniform}
\end{equation}
\end{Proposition}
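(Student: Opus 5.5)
We will prove the decomposition by conditioning, first on the number of reporting sites $K$ and the field, then averaging. The law of total variance is applied with respect to the $\sigma$-field generated by $K$ and $\{r_i\}$. On $\{K=k\}$ with $k\ge1$, and given the field, the reporting set is a uniformly random $k$-subset, independent of the field under MCAR. Hence $\widehat r$ is a simple-random-sample mean without replacement. Its conditional mean is $\bar r$ and its conditional variance is $\frac{N-k}{(N-1)k}s^2$. On $K=0$ we have $\widehat r=m_r$, which is a constant, so its conditional mean is $m_r$ and its conditional variance is $0$.

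Writing $p_0=(1-\alpha)^N$, the conditional mean is $\bar r$ on $K\ge1$ and $m_r$ on $K=0$. Because $K$ is independent of the field and $\mathbf{E}\bar r=m_r$, the variance of this conditional mean is $(1-p_0)\mathrm{Var}(\bar r)$. To see this, note that it equals $\mathbf{E}[(\bar r-m_r)^2\mathbf 1_{K\ge1}]$. The expected conditional variance is $\sum_{k\ge1}\binom Nk\alpha^k(1-\alpha)^{N-k}\frac{N-k}{(N-1)k}\mathbf{E}s^2=\gamma_N\mathbf{E}s^2$, again using independence of $K$ and the field. Adding the two gives Eq.~\eqref{eq:uniform-decomp}. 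The identity $\mathbf{E}s^2=\Sigma_\sigma/N+\mathrm{Var}_m-\mathrm{Var}(\bar r)$ follows by expanding $\frac1N\sum_i\mathbf{E}r_i^2-\mathbf{E}\bar r^2$ in terms of variances and means. As a consistency check, this reproduces Eq.~\eqref{eq:exact-uniform-var} after collecting the coefficients of $\Sigma_\sigma$, $\Sigma_{\mathrm{off}}$ and $\mathrm{Var}_m$.

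For the $(0,1)$ identity, we start from the uniform-weight form already recorded, $\mu_2^{(0,1)}=\mathrm{Var}(\bar r)+\rho^2(\Sigma_\sigma/N+\mathrm{Var}_m)$, and substitute $\Sigma_\sigma/N+\mathrm{Var}_m=\mathbf{E}s^2+\mathrm{Var}(\bar r)$. Subtracting Eq.~\eqref{eq:uniform-decomp} then gives Eq.~\eqref{eq:01-error-uniform} directly.

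The main obstacle is the asymptotic claim $\gamma_N=\rho^2+\mathcal O(N^{-2})$. We write $\frac{N-k}{(N-1)k}=\frac{N}{N-1}\bigl(\frac1k-\frac1N\bigr)$. Then
\[
\gamma_N=\frac{N}{N-1}\Bigl(\mathbf{E}[K^{-1}\mathbf 1_{K\ge1}]-\frac{1-p_0}{N}\Bigr).
\]
So the key input is an expansion of the inverse binomial moment,
\[
\mathbf{E}[K^{-1}\mathbf 1_{K\ge1}]=\frac{1}{N\alpha}+\frac{1-\alpha}{N^2\alpha^2}+\mathcal O(N^{-3}).
\]
To obtain it, we Taylor-expand $1/K$ about $N\alpha$ to third order with remainder, on the event $|K-N\alpha|\le N\alpha/2$. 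The binomial central moments are $\mathbf{E}(K-N\alpha)^2=N\alpha(1-\alpha)$ and $\mathbf{E}|K-N\alpha|^3=\mathcal O(N^{3/2})$, with the fourth moment $\mathcal O(N^2)$, and these control the terms. The complementary event, together with $p_0$, is bounded by Hoeffding's inequality, using $1/K\le1$ there. Substituting the expansion gives
\[
\gamma_N=\frac{N}{N-1}\Bigl(\frac{1-\alpha}{N\alpha}+\frac{1-\alpha}{N^2\alpha^2}+\mathcal O(N^{-3})\Bigr)=\frac{1-\alpha}{\alpha N}+\mathcal O(N^{-2}),
\]
which equals $\rho^2+\mathcal O(N^{-2})$, since $\frac{N}{N-1}=1+\mathcal O(N^{-1})$.
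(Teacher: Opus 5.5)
Your proof is correct, but it runs in the opposite direction from the paper's. The paper takes the single-sum formula \eqref{eq:exact-uniform-var} as given and proves \eqref{eq:uniform-decomp} purely algebraically. For each $k$ it checks, by collecting the coefficients of $\Sigma_\sigma$, $\Sigma_{\mathrm{off}}$ and $\mathrm{Var}_m$, that the bracket equals $\mathrm{Var}(\bar r)+\frac{N-k}{(N-1)k}\mathbf{E}s^2$, and then sums over $k\ge1$. It quotes the asymptotics of $\gamma_N$ from Stephan's classical expansion of $\mathbf{E}[K^{-1}\mathbf 1_{K\ge1}]$.

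You instead obtain the decomposition directly from the law of total variance given $K$ and the field. You use the finite-population variance of a simple random sample without replacement, which is the mechanism the paper describes only informally before the proposition. In your route \eqref{eq:exact-uniform-var} becomes a consequence, and your ``consistency check'' is the paper's proof read backwards. Your route is self-contained, whereas the paper states the single sum with only a one-line justification. It also makes clear that the only inputs are the independence of the reporting pattern from the field and the uniformity of the reporting set over $k$-subsets given $K=k$. The paper's route is shorter once the single sum is granted. Your identity $\frac{N-k}{(N-1)k}=\frac{N}{N-1}\bigl(\frac1k-\frac1N\bigr)$ reduces the claim about $\gamma_N$ cleanly to the inverse binomial moment, which you prove rather than cite.

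One small imprecision concerns the expansion of $\mathbf{E}[K^{-1}\mathbf 1_{K\ge1}]$. With the moments you list, bounding the cubic term through $\mathbf{E}|K-N\alpha|^3=\mathcal O(N^{3/2})$ gives an error of $\mathcal O(N^{-5/2})$, not $\mathcal O(N^{-3})$. To get $\mathcal O(N^{-3})$ you need the signed central moment $\mathbf{E}(K-N\alpha)^3=N\alpha(1-\alpha)(1-2\alpha)=\mathcal O(N)$, with the fourth moment controlling the remainder.

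This does not affect the proposition. The claim $\gamma_N=\rho^2+\mathcal O(N^{-2})$ needs only $\mathbf{E}[K^{-1}\mathbf 1_{K\ge1}]=1/(N\alpha)+\mathcal O(N^{-2})$, and a first-order expansion with a quadratic remainder on the good event already gives that. The sharper form matters only for $\gamma_N-\rho^2=\rho^2(1+\alpha)/(\alpha N)+\mathcal O(N^{-3})$, which is used in Remark~\ref{rem:lambda}.
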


\textbf{Proof.} For each $k$, the bracket in Eq.~\eqref{eq:exact-uniform-var} equals $\mathrm{Var}(\bar r)+\frac{N-k}{(N-1)k}\,\mathbf{E}s^2$, as can be checked by collecting the coefficients of $\Sigma_\sigma$, $\Sigma_{\mathrm{off}}$, and $\mathrm{Var}_m$. Summing over $k\ge1$ gives Eq.~\eqref{eq:uniform-decomp}. The expression for $\mu_2^{(0,1)}$ follows from Eq.~\eqref{eq:mar01-compact} with $\beta_i=1/N$, and the order of $\gamma_N-\rho^2$ follows from the classical expansion $\mathbf{E}[K^{-1}\mathbf 1_{K\ge1}]=1/(\alpha N)+(1-\alpha)/(\alpha^2N^2)+\mathcal O(N^{-3})$ \cite{Stephan1945}; see also the tables of \cite{Grab1954}. \hfill$\square$

\medskip
The first term of Eq.~\eqref{eq:uniform-decomp} is the variance that remains with complete reporting, and the second is the contribution of missing data, which is proportional to the spatial variance of each realization rather than to the site variances. Strong spatial correlation raises the first term but reduces the second, whereas $(0,1)$ uses the site variances and the spread of site means without this reduction. Under the local-dependence condition, $\mathrm{Var}(\bar r)=\mathcal O(N^{-1})$ and both leading terms in Eq.~\eqref{eq:01-error-uniform} are $\mathcal O(N^{-2})$, in agreement with Proposition~\ref{prop:rho4}. Under persistent spatial dependence $\mathrm{Var}(\bar r)$ does not decrease, and the error of $(0,1)$ is of order $\rho^2$ rather than $\rho^4$. For stationary fields, the decomposition gives the following two cases directly.

\begin{Corollary}[Local-range and persistent spatial dependence]
\label{prop:spatial-dependence}
Suppose reporting is MCAR with uniform weights, and that $r_i=Y(d_i)$, where $Y$ is a second-order stationary field with mean $m$ and covariance function $C(h):=\mathrm{Cov}\{Y(x),Y(x+h)\}$; let $\widehat r$ take any fixed finite value on $A_\emptyset$.
(i) If $N^{-1}\sum_{i\ne j}C(d_i-d_j)\to\Gamma_{\rm off}<\infty$, then
\[
\operatorname{Var}(\widehat r)=\frac{1}{N}\left\{\frac{C(0)}{\alpha}+\Gamma_{\rm off}\right\}+o(N^{-1}).
\]
(ii) If instead $N^{-2}\sum_{i\ne j}C(d_i-d_j)\to c_\infty>0$, then
\[
\operatorname{Var}(\widehat r)\longrightarrow c_\infty,
\qquad
\mu_{2}^{(0,1)}\longrightarrow c_\infty,
\quad\text{although}\quad
\rho^2=\frac{1-\alpha}{\alpha N}\longrightarrow0.
\]
\end{Corollary}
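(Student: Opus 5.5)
The proof applies Proposition~\ref{prop:uniform-decomp} with the stationary-field specializations. Since $r_i=Y(d_i)$, every site has mean $m$ and variance $C(0)$. Hence $\mathrm{Var}_m=0$, $\Sigma_\sigma=NC(0)$ and $\Sigma_{\mathrm{off}}=\sum_{i\ne j}C(d_i-d_j)$, which gives
\[
\mathrm{Var}(\bar r)=\frac{C(0)}{N}+\frac{\Sigma_{\mathrm{off}}}{N^2},
\qquad
\mathbf{E}s^2=C(0)-\mathrm{Var}(\bar r).
\]
The proposition assumes $\widehat r=m_r=m$ on $A_\emptyset$, whereas the corollary allows any fixed finite value $c$. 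The first step therefore handles the convention. Changing the value on $A_\emptyset$ alters $\mathbf{E}\widehat r^2$ and $(\mathbf{E}\widehat r)^2$ only through terms bounded by a constant (depending on $c$, $m$ and $C(0)$, using Cauchy--Schwarz on $\mathbf{E}[\widehat r\,\mathbf 1_{A_\emptyset}]$) times $\mathbf{P}(A_\emptyset)=(1-\alpha)^N$. Because $\widehat r$ is independent of the event only through the field, one must compare variances carefully, but the difference is $\mathcal O((1-\alpha)^N)$, which is $o(N^{-1})$ and also tends to zero. The rest of the argument can then assume $\widehat r=m$ on $A_\emptyset$.

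For case (i), $\Sigma_{\mathrm{off}}/N\to\Gamma_{\rm off}$ gives
\[
\mathrm{Var}(\bar r)=\frac{C(0)+\Gamma_{\rm off}}{N}+o(N^{-1}),
\]
and hence $\mathbf{E}s^2=C(0)+\mathcal O(N^{-1})$. Using $\gamma_N=\rho^2+\mathcal O(N^{-2})$ with $\rho^2=(1-\alpha)/(\alpha N)$, Eq.~\eqref{eq:uniform-decomp} yields
\[
\mathrm{Var}(\widehat r)=\frac{C(0)+\Gamma_{\rm off}}{N}+\frac{1-\alpha}{\alpha N}\,C(0)+o(N^{-1}).
\]
Since $C(0)\bigl(1+\tfrac{1-\alpha}{\alpha}\bigr)=C(0)/\alpha$, this is the stated formula. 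The factor $1-(1-\alpha)^N$ contributes only an exponentially small correction.

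For case (ii), $\Sigma_{\mathrm{off}}/N^2\to c_\infty$ and $C(0)/N\to0$ give $\mathrm{Var}(\bar r)\to c_\infty$. Moreover $\mathbf{E}s^2\le C(0)$ is bounded, while $\gamma_N\le\rho^2+\mathcal O(N^{-2})\to0$, so $\mathrm{Var}(\widehat r)\to c_\infty$. For the approximation I use the identity $\mu_2^{(0,1)}=\mathrm{Var}(\bar r)+\rho^2\bigl(\mathbf{E}s^2+\mathrm{Var}(\bar r)\bigr)$ from the proposition. Both quantities in the bracket are bounded by $C(0)$, so the correction tends to zero and $\mu_2^{(0,1)}\to c_\infty$. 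The claim $\rho^2\to0$ is immediate.

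The only delicate point is the convention on $A_\emptyset$, namely showing that the variance changes by at most $\mathcal O((1-\alpha)^N)$. I would write $\widehat r_c=\widehat r_m+(c-m)\mathbf 1_{A_\emptyset}$ and expand the variance. The cross term is bounded by $2|c-m|\,\|\widehat r_m-\mathbf{E}\widehat r_m\|_2\,\mathbf{P}(A_\emptyset)^{1/2}$, and $\|\widehat r_m-\mathbf{E}\widehat r_m\|_2$ is bounded since $|\widehat r_m|\le\max_i|r_i|$ has second moment at most $\sum_i\mathbf{E}r_i^2$. That crude bound grows like $N$, but it is only polynomial, so multiplying by $(1-\alpha)^{N/2}$ still gives $o(N^{-1})$.
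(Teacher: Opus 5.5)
Your proof is correct and follows the paper's argument: reduce to the convention $\widehat r=m$ on $A_\emptyset$, then substitute $\Sigma_\sigma=NC(0)$ and $\mathrm{Var}_m=0$ into Eq.~\eqref{eq:uniform-decomp} and into the identity for $\mu_2^{(0,1)}$. One small correction: your Cauchy--Schwarz treatment of the convention gives $\mathcal O(\sqrt N(1-\alpha)^{N/2})$, not the $\mathcal O((1-\alpha)^N)$ you state, although this weaker bound still suffices. The sharper rate holds because $\widehat r_m\equiv m$ on $A_\emptyset$ and $\mathbf{E}\widehat r_m=m$, so the cross term vanishes exactly and the variance changes only by $(c-m)^2(1-\alpha)^N\{1-(1-\alpha)^N\}$.
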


\textbf{Proof.} Changing the value of $\widehat r$ on $A_\emptyset$ changes the variance only by $\mathcal O((1-\alpha)^N)$, so we may set $\widehat r=m_r$ there. Here $\Sigma_\sigma=NC(0)$ and $\mathrm{Var}_m=0$, so $\mathrm{Var}(\bar r)=N^{-1}C(0)+N^{-2}\sum_{i\ne j}C(d_i-d_j)$ and $\mathbf{E}s^2=C(0)-\mathrm{Var}(\bar r)$. In case (i), $\mathrm{Var}(\bar r)=N^{-1}\{C(0)+\Gamma_{\rm off}\}+o(N^{-1})$, $\mathbf{E}s^2=C(0)+\mathcal O(N^{-1})$, and $\gamma_N=(1-\alpha)/(\alpha N)+\mathcal O(N^{-2})$, so Eq.~\eqref{eq:uniform-decomp} gives the stated expansion. In case (ii), $\mathrm{Var}(\bar r)\to c_\infty$ while $\gamma_N\to0$ and $0\le\mathbf{E}s^2\le C(0)$, so $\mathrm{Var}(\widehat r)\to c_\infty$; the same holds for $\mu_2^{(0,1)}=\mathrm{Var}(\bar r)+\rho^2\big(\mathbf{E}s^2+\mathrm{Var}(\bar r)\big)$. \hfill$\square$

\medskip
Corollary~\ref{prop:spatial-dependence} separates two effects: $\rho$ controls the concentration of the denominator, whereas the growth of the covariance sum determines whether the variance of the spatial average decreases as the network grows. A network can therefore have small $\rho$ and still have a variance that does not decrease with $N$, as in Fig.~\ref{fig:stress}(a).

\begin{remark}[A diagnostic for correlated fields]
\label{rem:lambda}
Let $\lambda:=\mathrm{Var}(\bar r)/\mathbf{E}s^2$, the variance of the average with complete reporting relative to the expected spatial variance of a single realization. Since $\gamma_N=\rho^2+\mathcal O(N^{-2})$, Eq.~\eqref{eq:01-error-uniform} gives
\[
\frac{\mu_2^{(0,1)}-\mathrm{Var}(\widehat r)}{\mathrm{Var}(\widehat r)-\mathrm{Var}(\bar r)}=\lambda+\mathcal O(N^{-1}),
\]
so that the error of $(0,1)$ relative to the contribution of missing data is set by $\lambda$, independently of $\alpha$ and $N$ to leading order. Under local dependence $\lambda=\mathcal O(N^{-1})$; under persistent dependence it tends to a positive limit (Corollary~\ref{prop:spatial-dependence}); in the special case of  equal pairwise correlation $c$, $\lambda=c/(1-c)+\mathcal O(N^{-1})$. Relative to the total variance, by contrast, the error of $(0,1)$ is at most of order $\rho^2$ whatever the spatial dependence, since $\mathrm{Var}(\widehat r)$ is at least $(1-(1-\alpha)^N)\mathrm{Var}(\bar r)$ and at least $\gamma_N\mathbf{E}s^2$, while $\gamma_N-\rho^2=\mathcal O(N^{-2})$. The sign of the error depends on $\lambda$. Since $\gamma_N-\rho^2=\rho^2(1+\alpha)/(\alpha N)+\mathcal O(N^{-3})$, to leading order $(0,1)$ overestimates the variance when $\lambda>(1+\alpha)/(\alpha N)$, as for correlated fields, so that it is then conservative, and underestimates it otherwise, as for independent sites, for which $\lambda\le1/(N-1)$. Since $\lambda$ depends only on the first and second moments of the field, it can be computed before an approximation is chosen.
\end{remark}

% ============================================================

\subsection{Summary of the truncations, and the role of higher truncations}
\label{sec:remarks}

Table~\ref{tab:truncation-properties} collects the properties of the truncations and relates them to the results of Section~\ref{sec:moments}, and Remark~\ref{rem:hierarchy} explains when the higher truncations can matter.

\begin{table}[!htb]
\centering
\small
\caption{Properties of the truncations under MCAR with uniform weights. Error orders are for the absolute error, with a bounded field and local dependence, in which the variance itself is of order $\rho^2$; under persistent dependence the error of $(0,1)$ is instead about $\rho^2\mathrm{Var}(\bar r)$ (Proposition~\ref{prop:uniform-decomp}). $^\dagger$Proved in Section~\ref{sec:moments}; the other orders follow from the term counting of Remarks~\ref{rem:translation} and~\ref{rem:hierarchy} and agree with exact calculations.}
\label{tab:truncation-properties}
\renewcommand{\arraystretch}{1.3}
\begin{tabular}{>{\raggedright\arraybackslash}p{0.11\linewidth} >{\raggedright\arraybackslash}p{0.20\linewidth} >{\centering\arraybackslash}p{0.09\linewidth} >{\centering\arraybackslash}p{0.12\linewidth} >{\raggedright\arraybackslash}p{0.32\linewidth}}
\hline
Truncation & Retains & Shift invariant & Error order & Intuition and link to Section~\ref{sec:moments} \\
\hline
$(0,0)$ & Denominator fixed at its mean & No & $\rho^2$\,$^\dagger$ & Omits the first-order term $-m_RV/m_S^2$, leaving the mean-induced term $\rho^2m_r^2$ \\
$(0,1)$ & Leading $\mathrm{Var}(S)$ and $\mathrm{Cov}(R,S)$ corrections & Yes & $\rho^4$\,$^\dagger$ & First shift-invariant truncation, and the estimator of \cite{Seshadri2018}; error $\approx\rho^2\mathrm{Var}(\bar r)$, or about $\lambda$ relative to the contribution of missing data \\
$(1,0)$ & Coupling term $D$, without $B$ & No & $\rho^2$ & Mixed and denominator families unbalanced; mean-dependent term of order $m_r^2N^{-1}$ \\
$(1,1)$ & $(0,1)$ plus the coupling term $D$ & No & $\rho^4$ & Adds terms of the same order as the $(0,1)$ error without completing them \\
$(0,2)$ & $(0,1)$ plus $\mathrm{Var}(V^2)$ and $\mathrm{Cov}(V,V^2)$ terms & No & $\rho^4$ & Its extra term $m_r^2\mathrm{Var}(V^2)/\alpha^4$ grows with the mean level \\
$(n,n+1)$, $n\ge1$ & All terms to total order $n+1$ & Yes & $\rho^4$, $\rho^6$, $\rho^6$, $\rho^8$, \dots & Converge for $\alpha>\tfrac12$ (Remark~\ref{rem:trunc-conv}); the order improves only every second step \\
Exact & Average over all reporting patterns & Yes & --- & Single sum over $K$; $(1-(1-\alpha)^N)\mathrm{Var}(\bar r)+\gamma_N\,\mathbf{E}s^2$ \\
\hline
\end{tabular}
\end{table}

\begin{remark}[When higher truncations matter]
\label{rem:hierarchy}
Under MCAR at fixed $\alpha$, with uniform weights and the local-dependence condition Eq.~\eqref{eq:local-dep-sec2}, the baseline truncations $\mu_2^{(0,0)}$ and $\mu_2^{(0,1)}$ are of order $N^{-1}$, whereas
\[
\mu_{1,2}=\mathcal O(N^{-2}),\qquad \mu_{2,1}=\mathcal O(N^{-2}),\qquad \kappa=\mathcal O(N^{-2}).
\]
The coupling and quadratic-denominator terms that distinguish $(1,0)$, $(1,1)$, and $(0,2)$ from the baseline are therefore one order smaller in $N$, but of the same order as the error of $(0,1)$ itself (Proposition~\ref{prop:rho4}). These truncations change the approximation at the order of its error without completing it, which is why they need not improve on $(0,1)$; a complete $\mathcal O(N^{-2})$ correction requires all terms to third order, as in $(2,3)$. The step from $(0,0)$ to $(0,1)$ is different: it completes the first-order terms, replacing the mean-induced term of $(0,0)$ by one in the spread of the site means, which vanishes when all site means are equal (Eq.~\eqref{eq:mar01-compact}).

Two regimes can make the higher corrections more important: small networks, where $N^{-1}$ is not small, and concentrated weights, where a few large $\beta_i$ inflate $W_2$ and the higher weight moments. The second is not characterized by $\rho$ alone. The error of $(0,1)$ involves the third moments $\mathbf{E}[U^2V]$, $\mathbf{E}[UV^2]$, and $\mathbf{E}[V^3]$ (Eq.~\eqref{eq:LA0-third-moments}), which are sums over the cubes of the weights, as in $W_3=\sum_i\beta_i^3$, rather than over their squares, so that its relative size depends on $(1-\alpha)W_3/(\alpha W_2)$ as well as on $\rho^2$. By the Cauchy--Schwarz inequality $W_3\ge W_2^2$, so this ratio is never smaller than $\rho^2$, and it equals $\rho^2$ only for uniform weights. This helps explain why the stress tests of Section~\ref{sec:numerics} show errors several times larger with concentrated than with uniform weights at comparable $\rho$. Long-range spatial dependence can also change the scaling of the mixed moments, so that the sharper bounds relying on Eq.~\eqref{eq:local-dep-sec2} need not hold, although Corollary~\ref{cor:mar-error} still applies. The rankings of the truncations found in Section~\ref{sec:numerics} are therefore properties of the cases examined, not general rules.
\end{remark}

\section{Numerical experiments}
\label{sec:numerics}

We use four numerical experiments to test the results of Sections~\ref{sec:moments}-\ref{sec:truncated}. The first, a homogeneous benchmark, tests the error rates of Proposition~\ref{prop:rho4}, the good-event and rare-event components of Theorem~\ref{thm:rho-error}, and, in Table~\ref{tab:convergence}, the convergence conditions of Proposition~\ref{prop:var-conv} and Remark~\ref{rem:trunc-conv}. The second compares the truncations with the exact variance for the IMD rainfall field, testing Proposition~\ref{prop:uniform-decomp}, the diagnostic $\lambda$ of Remark~\ref{rem:lambda}, and the ordering of the higher truncations discussed in Remark~\ref{rem:hierarchy}. The third adds a common shift to the field, testing the invariance criterion of Remark~\ref{rem:translation}. The fourth examines strong spatial correlation, small networks, and concentrated weights, the regimes identified in Proposition~\ref{prop:uniform-decomp} and Remark~\ref{rem:hierarchy}. Supplementary Information Figs.~S1-S3 give additional scaling results, verify the closed forms of the mixed moments, and decompose the low-order approximations term by term.

For the empirical experiments we use the daily $1^\circ\times1^\circ$ gridded rain-gauge dataset over Indian land regions produced by the India Meteorological Department (IMD) \cite{Rajeevan2006}. The 111 yearly arrays for 1901-2011 are concatenated across the years in sequence, giving a $T\times N$ record with $T=40515$ days and $N=357$ locations. Its site means and covariance matrix, pooled over all days of the year with normalization $1/T$, are treated as population moments in the MCAR calculations. Unless stated otherwise the weights are uniform, $\beta_i=1/N$, and the exact variance is then given by Eq.~\eqref{eq:exact-uniform-var} for any $N$. Only the small nonuniformly weighted networks of Fig.~\ref{fig:stress}(c) and Supplementary Information Fig.~S2 require complete enumeration of the reporting patterns.

\paragraph{Error rates and concentration of the denominator.}
The homogeneous benchmark uses a statistically homogeneous field, $m_i=m=1$, $\sigma_i^2=\sigma^2=1$, and $\sigma_{ij}=0$ for $i\ne j$, with uniform weights, which removes spatial and mean heterogeneity. In Fig.~\ref{fig:rho_master}(a), $N$ varies at fixed $\alpha=0.3$, so that $\rho=\sqrt{(1-\alpha)/(\alpha N)}$ decreases as the network grows. The exact errors $|\mathrm{Var}(\widehat r)-\mu_2^{(0,0)}|$ and $|\mathrm{Var}(\widehat r)-\mu_2^{(0,1)}|$ follow slopes $2$ and $4$, so that the theoretical rates $\rho^2$ and $\rho^4$ of Proposition~\ref{prop:rho4} are recovered. Relative errors behave differently. In this example,
\[
\mathrm{Var}(\widehat r)\sim \frac{\sigma^2}{\alpha N},
\qquad
\frac{|\mathrm{Var}(\widehat r)-\mu_2^{(0,0)}|}{\mathrm{Var}(\widehat r)}
\longrightarrow
(1-\alpha)\frac{m^2}{\sigma^2},
\]
so a vanishing absolute error of $(0,0)$ does not imply a vanishing relative error. Figure~\ref{fig:rho_master}(b) shows this limit for three availabilities, whereas the relative error of $(0,1)$ continues to decrease with $N$.

Figures~\ref{fig:rho_master}(c)-(d) show the two ingredients of Theorem~\ref{thm:rho-error}: as $N$ grows, the normalized denominator concentrates around zero, and the probability $\mathbf{P}(G^c)$ of the rare event falls rapidly, remaining below both the Chebyshev and Bernstein bounds. Supplementary Information Fig.~S1 varies $\alpha$ as well as $N$. There the normalized error $|\mathrm{Var}(\widehat r)-\mu_2^{(0,1)}|/\rho^4$ approaches $\sigma^2/(1-\alpha)$, independently of $m$. This follows from Eq.~\eqref{eq:01-error-uniform}: in the benchmark $\mathrm{Var}(\bar r)=\sigma^2/N$ and $\mathbf{E}s^2=\sigma^2(1-N^{-1})$, and since $\gamma_N=\rho^2\{1+(1+\alpha)/(\alpha N)+\mathcal O(N^{-2})\}$ \cite{Stephan1945}, the error is $\mu_2^{(0,1)}-\mathrm{Var}(\widehat r)=-\sigma^2\rho^4/(1-\alpha)+\mathcal O(\rho^6)$. The error is therefore of order $\rho^4$ at fixed $\alpha$, with a coefficient that depends on the availability, and since $\mathrm{Var}(\widehat r)\approx\sigma^2\rho^2/(1-\alpha)$, the relative error of $(0,1)$ is close to $-\rho^2$, an underestimate, as expected for independent sites (Remark~\ref{rem:lambda}). 

\paragraph{Convergence at fixed network size.}
Table~\ref{tab:convergence} uses the same benchmark with $N=10$. Along the shift-invariant truncations $(n,n+1)$, the truncated variances diverge for $\alpha<\tfrac12$, oscillate without converging at $\alpha=\tfrac12$, and converge for $\alpha>\tfrac12$. Along the $(0,n)$ family they drift away from the exact variance even for $\alpha>\tfrac12$, because their value on the all-missing pattern grows with $n$ (Remark~\ref{rem:trunc-conv}). Supplementary Information Table~S3 shows that, along the shift-invariant truncations, the order of the error in $N$ improves only at every second step, as the counting rule predicts (Remark~\ref{rem:translation}).

\begin{table}[t]
\centering
\small
\caption{Convergence at fixed $N=10$ for the benchmark with homogeneous field statistics ($m=\sigma=1$, uniform weights). (a)~The shift-invariant truncations $\mu_2^{(n,n+1)}$, compared with the exact variance: they diverge at $\alpha=0.45$, oscillate without converging at $\alpha=\tfrac12$, and converge at $\alpha=0.55$. (b)~At $\alpha=0.6$, the $(0,n)$ family drifts away from the exact variance because its value on the all-missing pattern grows with $n$, whereas the $(n,n+1)$ sequence converges. All values are computed as exact averages over the reporting process.}
\label{tab:convergence}
\vspace{4pt}
(a)\\[2pt]
\begin{tabular}{lcccccccc}
\hline
$\alpha$ & exact & $n=5$ & $n=10$ & $n=20$ & $n=40$ & $n=41$ & $n=80$ & $n=81$\\
\hline
$0.45$ & $0.2614$ & $0.2487$ & $0.2640$ & $0.4229$ & $477.4$ & $712.4$ & $4.47\times10^{9}$ & $6.68\times10^{9}$\\
$0.50$ & $0.2289$ & $0.2209$ & $0.2276$ & $0.2290$ & $0.2292$ & $0.2288$ & $0.2292$ & $0.2288$\\
$0.55$ & $0.2028$ & $0.1981$ & $0.2018$ & $0.2027$ & $0.2028$ & $0.2028$ & $0.2028$ & $0.2028$\\
\hline
\end{tabular}

\vspace{8pt}
(b)\\[2pt]
\begin{tabular}{lccccccc}
\hline
$\alpha=0.6$, exact $0.1818$ & $n=1$ & $n=2$ & $n=4$ & $n=8$ & $n=16$ & $n=32$ & $n=64$\\
\hline
$\mu_2^{(0,n)}$ & $0.1667$ & $0.1747$ & $0.1896$ & $0.2112$ & $0.2409$ & $0.3210$ & $0.6360$\\
$\mu_2^{(n,n+1)}$ & $0.1552$ & $0.1751$ & $0.1789$ & $0.1809$ & $0.1816$ & $0.1817$ & $0.1818$\\
\hline
\end{tabular}
\end{table}

\begin{figure}
    \centering
    \makebox[\textwidth][c]{%
        \includegraphics[width=1.12\textwidth,height=0.72\textheight,keepaspectratio]{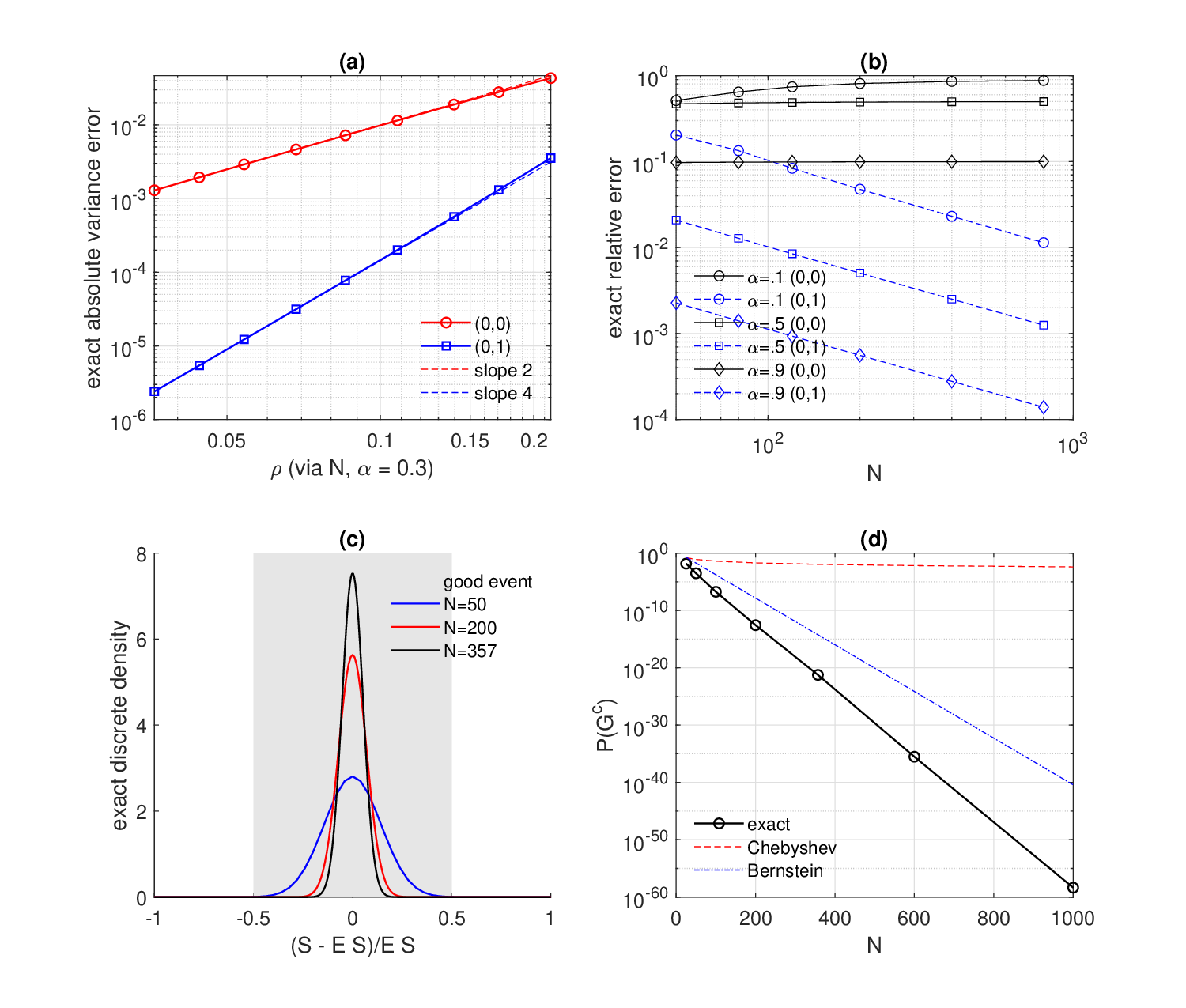}%
    }
    \caption{Scaling of the truncation error relative to exact calculation and the denominator concentration for the homogeneous benchmark (i.i.d.\ sites, $m=\sigma=1$, uniform weights). (a) Errors of $(0,0)$ and $(0,1)$ relative to the exact variance against $\rho$ at fixed $\alpha=0.3$, for $N=50,80,120,200,320,500,800,1200,1800$, compared with reference slopes $2$ and $4$. (b) Corresponding relative errors for $N=50,80,120,200,400,800$ at $\alpha=0.1,0.5,0.9$: the relative error of $(0,0)$ approaches the nonzero limit $(1-\alpha)m^2/\sigma^2$, whereas that of $(0,1)$ continues to decrease. (c)~Exact distribution of the normalized denominator $(S-\mathbf{E}S)/\mathbf{E}S$, which is binomial for uniform weights, for $N=50,200,357$ at $\alpha=0.5$; the shaded interval is the good event $|S-\mathbf{E}S|\le\tau\mathbf{E}S$ with $\tau=0.5$. (d)~Exact $\mathbf{P}(G^c)$ for $N=25,50,100,200,357,600,1000$, with the Chebyshev and Bernstein bounds for the same $\alpha$ and $\tau$.}
    \label{fig:rho_master}
\end{figure}

\paragraph{Validation using the IMD rainfall field.}
The second experiment uses the full spatial heterogeneity and covariance of the IMD field. Figure~\ref{fig:imd_validation} compares the exact variance with $(0,0)$, $(0,1)$, $(1,0)$, and $(1,1)$ for $0.1\le\alpha\le1$. All four approximations lie within a few percent of the exact variance, and $(0,1)$ is the most accurate at every availability. The error is not monotonic in the truncation order: at $\alpha=0.1$ it is $1.6\%$ for $(0,1)$, but $3.2$-$4.8\%$ for the other three, so the additional coupling terms do not improve on $(0,1)$ (Remark~\ref{rem:hierarchy}).

The term-by-term decomposition in Supplementary Information Fig.~S3 shows why. The off-diagonal covariance contribution dominates the variance of this field, which lies far from the local-dependence regime of Eq.~\eqref{eq:local-dep-sec2}, with $\Sigma_{\mathrm{off}}\approx15N^2$ rather than $\mathcal O(N)$. The covariance part of $\mu_{2,1}$ is therefore not of higher order, and at $\alpha=0.1$ the coupling terms added by $(1,0)$ and $(1,1)$, $-2\mu_{2,1}/\alpha^3\approx-1.7$ and $\kappa/\alpha^4\approx0.5$ (mm day$^{-1}$)$^2$, are each larger than the error of $(0,1)$ itself, $0.29$ (mm day$^{-1}$)$^2$.

Relative to the contribution of missing data the errors are much larger (Fig.~\ref{fig:imd_validation}(c)). At $\alpha=0.1$, for example, the contribution of missing data is $3.02$ out of an exact variance of $18.24$ (mm day$^{-1}$)$^2$, and the error of $(0,1)$ is about $10\%$ of that contribution, although only $1.6\%$ of the total variance. The distinction matters because the contribution of missing data is the part of the uncertainty due to incomplete coverage, and an approximation that is accurate for the total variance can still misstate that part appreciably. The error of $(0,1)$ is dominated by the first term of Eq.~\eqref{eq:01-error-uniform}, $\rho^2\mathrm{Var}(\bar r)$. It arises because $(0,1)$ multiplies $\rho^2$ by $\mathbf{E}s^2+\mathrm{Var}(\bar r)$ rather than by the spatial variance $\mathbf{E}s^2$ alone, and so attributes part of the variability of the spatial average itself to missing data. Across the range of availability, the four approximations are in error by about $10$-$30\%$ of the contribution of missing data, with $(0,1)$ the smallest at about $10$-$12\%$. This last range agrees with the diagnostic of Remark~\ref{rem:lambda}, $\lambda\approx0.13$ for this field. As expected for a correlated field, $(0,1)$ overestimates the variance, so its estimate of the uncertainty due to missing data is conservative.

\begin{figure}
    \centering
    \makebox[\textwidth][c]{%
        \includegraphics[width=1.12\textwidth,height=0.72\textheight,keepaspectratio]{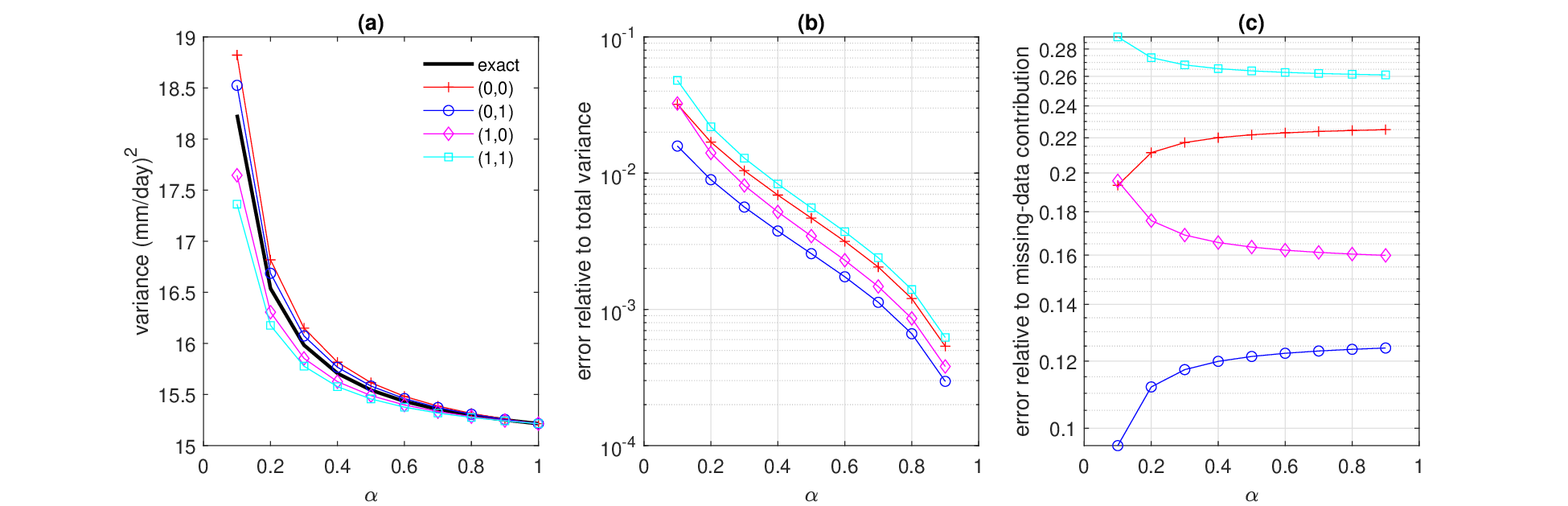}%
    }
    \caption{Low-order truncations for the IMD rainfall field, using population moments from the 1901-2011 daily record ($T=40515$, $N=357$) with uniform weights, for $\alpha=0.1,0.2,\ldots,1$. The exact MCAR variance is computed from Eq.~\eqref{eq:exact-uniform-var}. (a)~Exact variance and the $(0,0)$, $(0,1)$, $(1,0)$, and $(1,1)$ approximations. (b)~Their absolute errors relative to the total variance, for $\alpha<1$; at $\alpha=1$ all approximations are exact. (c)~The same errors relative to the contribution of missing data, $\mathrm{Var}(\widehat r)-\mathrm{Var}(\bar r)$. The errors are small relative to the total variance, which is dominated by the variance of the average with complete reporting, but not relative to the contribution of missing data; $(0,1)$ is the most accurate of the four throughout.}
    \label{fig:imd_validation}
\end{figure}

\paragraph{Dependence on the mean level of the field.}
For the rainfall field the difference between $(0,0)$ and $(0,1)$ is modest, because its mean level is small compared with its variability, $m_r\approx0.3\,\sigma_{\rm RMS}$. To examine the dependence on the mean level directly, without changing the covariance structure or the spatial heterogeneity of the means, we add the same constant $b$ at every site and time, $r_i^{(b)}(t)=r_i(t)+b$, and shift the all-missing convention correspondingly, $\widehat r^{(b)}=\widehat r+b$. The exact variance is then unchanged, and so are the covariance matrix and $\mathrm{Var}_m$, and hence $\mu_2^{(0,1)}$. By contrast, $(0,0)$ retains the mean-induced reporting contribution; under uniform weights,
\begin{equation}
\mu_2^{(0,0)}-\mu_2^{(0,1)}
=
\frac{1-\alpha}{\alpha}\frac{m_r^2}{N}.
\label{eq:translation-gap}
\end{equation}
Figure~\ref{fig:translation} confirms that only $(0,1)$ is unaffected by the shift, as Remark~\ref{rem:translation} predicts. The error of $(0,0)$ changes most, since its mean-dependent term is of order $N^{-1}$, whereas those of $(1,1)$ and $(0,2)$ are of order $N^{-2}$. Invariance concerns the lack of dependence on the mean, and is an exact notion that does not depend on the size of the error: over this range the error of $(1,1)$, which is negative for the rainfall field, shrinks in magnitude as the mean increases.

\begin{figure}
    \centering
    \makebox[\textwidth][c]{%
        \includegraphics[width=1.12\textwidth,height=0.72\textheight,keepaspectratio]{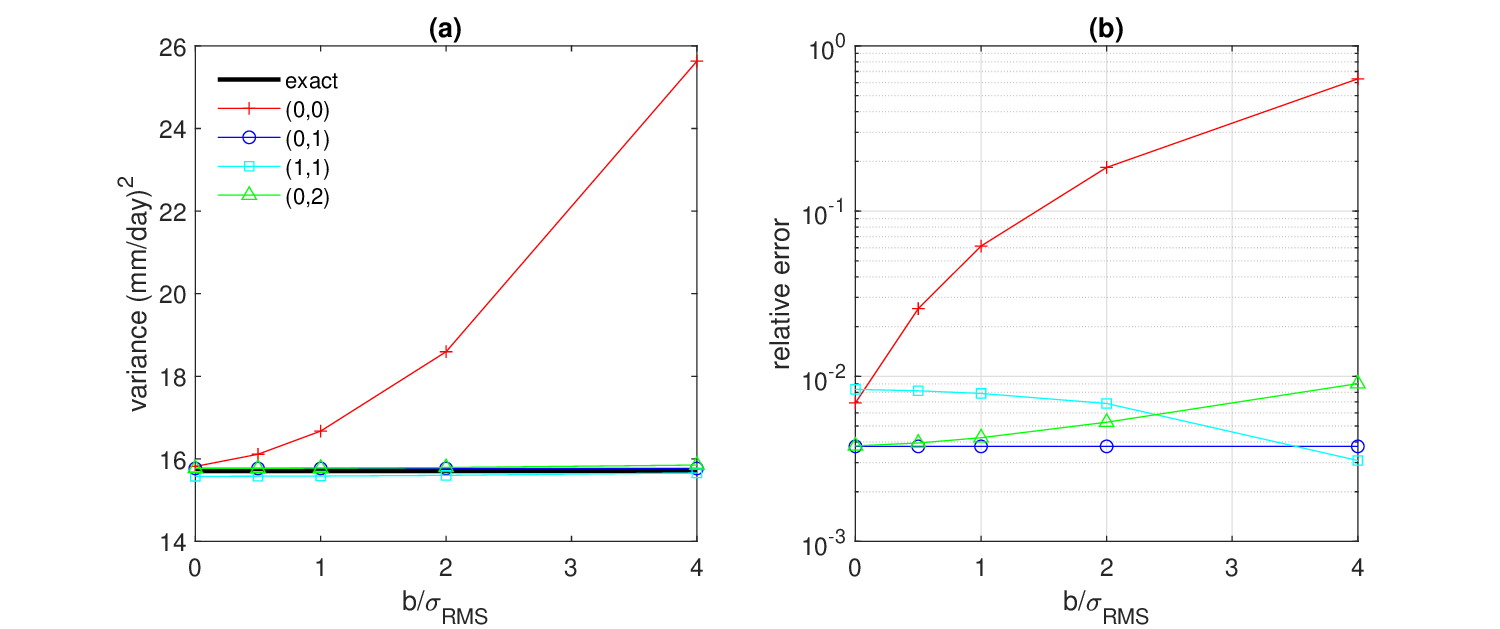}%
    }
    \caption{Common-shift experiment for the IMD field (with $\alpha=0.4$ and uniform weights). The same constant $b$ is added at every site; the horizontal coordinate is $b/\sigma_{\rm RMS}=0,0.5,1,2,4$, where $\sigma_{\rm RMS}=(N^{-1}\sum_i\sigma_i^2)^{1/2}$ for the original field. (a)~The exact variance and the $(0,0)$, $(0,1)$, $(1,1)$, and $(0,2)$ approximations. (b)~Their absolute relative errors; only that of $(0,1)$ is unchanged by the shift, like the exact variance.}
    \label{fig:translation}
\end{figure}

\paragraph{Stress tests.}
The low-order approximations become less accurate in three distinct ways, each with a different cause, and Fig.~\ref{fig:stress} illustrates each in turn. Together the tests show how small $\rho$ must be for the low-order approximations to be reliable, and that $\rho$ alone is not a sufficient guide when the weights are concentrated.

First, strong spatial correlation affects the first-order approximation even when the denominator is well concentrated. In Fig.~\ref{fig:stress}(a), all pairs of $N=100$ sites have the same correlation $c$. Relative to the total variance, the error of $(0,1)$ stays below $\rho^2=0.015$ for every $c$, but relative to the contribution of missing data it rises from $8\%$ at $c=0.1$ to nearly nine times that contribution at $c=0.9$. It closely follows the leading term $\rho^2\mathrm{Var}(\bar r)$ of Proposition~\ref{prop:uniform-decomp}, and hence the diagnostic $\lambda$ of Remark~\ref{rem:lambda}, which here is close to $c/(1-c)$. Strong correlation raises the variance of the average with complete reporting, as in the persistent case of Corollary~\ref{prop:spatial-dependence}, but reduces the spatial variance of each realization, and hence the contribution of missing data, which $(0,1)$ does not capture.

Second, Fig.~\ref{fig:stress}(b) reduces the network size under uniform weights, with site means between $0.6$ and $1.4$ and unit variances. The relative error of $(0,1)$ grows roughly as $\rho^2$, from below $1\%$ at $N=200$ ($\rho=0.09$) to about $18\%$ at $N=10$ ($\rho=0.39$); in the smallest networks, $N\le6$, it falls well below $\rho^2$. The additional denominator term retained by $(0,2)$ reduces the error by more than an order of magnitude for $N\ge30$, and $(0,2)$ remains the most accurate of the three truncations down to $N=6$; the dips in its curve mark the two changes of sign of its error. This advantage depends on the mean level, since $(0,2)$ is not invariant to a common shift (Remark~\ref{rem:translation}): for larger $m_r/\sigma$ its extra $m_r^2$ term would dominate. The panel therefore shows how a higher denominator correction can lower the truncation error in a particular case, but it gives no general rule for small networks.

Third, Fig.~\ref{fig:stress}(c) isolates the effect of weight concentration in a homogeneous independent network of $N=20$ sites. Giving site~1 less than the uniform weight $1/20$ changes $\rho$ little, and the errors stay close to their uniform-weight values. As site~1 receives more weight, $W_2$ and $\rho$ increase and the errors eventually become large: that of $(0,1)$ exceeds $100\%$ when site~1 carries half the weight ($\rho=0.63$). No single truncation is best throughout: $(0,2)$ is the most accurate when site~1 carries at most a tenth of the weight, and $(1,1)$ when it carries $30\%$ or more. Because panels (b) and (c) share their axes, the two can be compared directly. At $\rho\approx0.5$ the error of $(0,1)$ is several times larger with concentrated weights than with uniform weights, $75\%$ at $\rho=0.52$ against $14\%$ at $\rho=0.50$, so that higher weight moments, and not $\rho$ alone, then govern the error, as noted in Remark~\ref{rem:hierarchy}.

\begin{figure}
    \centering
    \makebox[\textwidth][c]{%
        \includegraphics[width=1.12\textwidth,height=0.72\textheight,keepaspectratio]{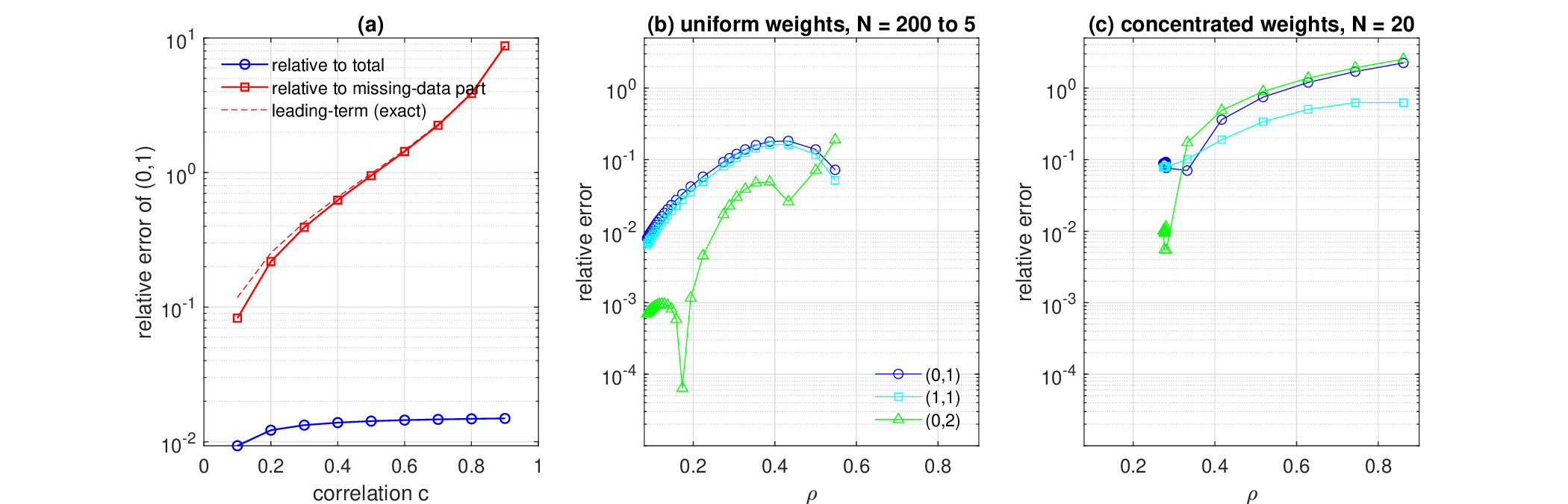}%
    }
    \caption{Three regimes that stress the low-order approximations, at fixed $\alpha=0.4$. (a)~Strong spatial correlation: uniform weights, $N=100$ sites with unit means and variances, and equal pairwise correlation $c=0.1,\ldots,0.9$. The error of $(0,1)$ is shown relative to the total variance and to the contribution of missing data, $\mathrm{Var}(\widehat r)-\mathrm{Var}(\bar r)$, together with the leading-term prediction $\rho^2\mathrm{Var}(\bar r)$ of Proposition~\ref{prop:uniform-decomp} relative to the same contribution. (b)~Small networks: uniform weights, $\Sigma=I$, and site means spanning $0.6$-$1.4$ linearly, for $N=200,190,\ldots,20$ and $N=18,16,\ldots,6,5$ ($\rho=0.09$-$0.55$). (c)~Weight concentration: a homogeneous independent network with $N=20$, unit means and variances, and $\beta_1$ between $0.004$ and $0.70$, below and above the uniform value $1/20$, with the remaining weight divided equally among the other sites ($\rho=0.27$-$0.86$); the exact variance is obtained by enumerating all $2^{20}$ reporting patterns. Panels (b) and (c) show absolute relative errors against $\rho$ on common axes.}
    \label{fig:stress}
\end{figure}

\section{Discussion and conclusions}
\label{sec:discussion}

When observations of rainfall, temperature, or other climate variables are missing randomly, their spatial average is a ratio of random variables, and in earlier work its variance was estimated by truncating the Taylor series of the ratio and taking expectations \cite{Seshadri2018}. The resulting estimator compared well with simulations, except when the probability of missing data was large. Here we have examined why, and more generally when such approximations can be trusted for spatial averages with missing data, how their accuracy depends on the reporting process, the weights, and the spatial correlation of the field, and what this implies for self-normalized estimators. Three questions must be distinguished, and they have different answers: whether the series converges as more terms are retained, how large the error of a given truncation is, and how this error declines as the network grows. With observations missing completely at random, the series converges for every set of reported observations only if the availability $\alpha$ exceeds one half, and the truncated variances then approach the exact variance only if the two kinds of correction are increased together (Section~\ref{sec:conv}). Convergence is not, however, needed for accuracy. The error of the truncations that correct only for the denominator is controlled by $\rho$, the coefficient of variation of the total weight of reported observations, and for uniform weights and a bounded field it vanishes as the network grows, at any availability and for any spatial dependence (Section~\ref{sec:accuracy}). When correlations decay with distance, the error of the first-order approximation, i.e. the variance estimator of \cite{Seshadri2018}, is of order $\rho^4$, compared with $\rho^2$ for the simplest approximation, which helps explain the good performance of the first-order approximation. Since $\rho^2=\tfrac{1-\alpha}{\alpha}\sum_i\beta_i^2$ grows as availability falls and as weight is concentrated, as in the optimal weights used there, this also explains why it departed from simulations at small availability.

The sequence of truncations also shows which higher corrections should be retained. Because the weights of the reported observations always sum to one, adding a constant to the field adds the same constant to the spatial average and leaves its variance unchanged. Only truncations that treat the numerator and denominator to the same order share this property, beginning with the first-order approximation. Other truncations to the variance, including the simplest approximation, contain terms that grow with the mean of the field and are absent from the exact variance (Section~\ref{sec:shift}). Higher truncations need not be more accurate, however, since they can retain some corrections of a given order while omitting others of the same order, as when $(1,1)$ adds the leading coupling term but not the matching denominator term. Even along the shift-invariant truncations $(0,1)$, $(1,2)$, $(2,3)$, and $(3,4)$, the order of the error improves only at every second step, being $\rho^4$, $\rho^4$, $\rho^6$, and $\rho^6$, respectively, under local dependence (Remark~\ref{rem:translation} and Table~\ref{tab:truncation-properties}).

Spatial covariance affects the variance in a way that $\rho$ does not measure. For uniform weights, the variance separates exactly into the variance of the average with complete reporting, $\mathrm{Var}(\bar r)$, and a contribution from missing data (Proposition~\ref{prop:uniform-decomp}). Strong spatial correlation increases the first but decreases the second, because each realization of the field then varies little from site to site, so that losing a site matters less. The first-order approximation does not capture this decrease. For correlated fields its error is approximately $\rho^2\mathrm{Var}(\bar r)$, whereas the contribution of missing data is approximately $\rho^2\mathbf{E}s^2$, where $\mathbf{E}s^2$ is the expected variance of the field across sites within a single realization. Relative to the contribution of missing data, the error is therefore about $\lambda=\mathrm{Var}(\bar r)/\mathbf{E}s^2$, up to terms of order $N^{-1}$. This ratio is small when the sites are nearly independent, but it can exceed one when they are strongly correlated. Relative to the total variance, by contrast, the error is at most of order $\rho^2$, whatever the spatial dependence. Because the approximation overestimates the variance of correlated fields, first-order estimates of the uncertainty due to missing data are conservative for such fields; for rainfall over India, $\lambda\approx0.13$. Although correlations in the rainfall field do not decay quickly enough with distance for the $\rho^4$ rate to apply, the first-order approximation was still the most accurate of the four we computed for it. When spatial dependence is persistent, so that correlations do not weaken with distance, the contribution of missing data vanishes as the network grows while the variance does not, and uncertainty in the spatial average then arises mainly from variability of the field rather than from incomplete coverage.

These results suggest a simple procedure, summarized in Table~\ref{tab:practical}. With uniform weights, the exact variance can be computed directly, as a single sum over the number of reported observations (Eq.~\eqref{eq:exact-uniform-var}), from a few aggregates accumulated from the data record. With general weights, the first-order approximation is the natural choice. It is unaffected by the mean level of the field, and it requires only the means and covariances of the observations, with independent measurement noise entering only through the site variances \cite{Seshadri2018}. For uniform weights its error relative to the total variance is of order $\rho^2$, so a target relative error $\varepsilon$ requires, roughly, $\rho^2\le\varepsilon$, that is, at least $(1-\alpha)/(\alpha\varepsilon)$ sites, or about $100$ sites for $\alpha=0.5$ and $\varepsilon=1\%$. For unequal weights the same rule, applied to the effective number of sites $1/\sum_i\beta_i^2$, is only indicative, since the error then also depends on $\sum_i\beta_i^3$ (Remark~\ref{rem:hierarchy}). When $\rho$ is not small, an exact calculation by enumeration, or simulation, is preferable, and the higher truncations are better treated as diagnostics than as improvements. Since $\rho$ fixes only the order of the error, the first-order approximation should also be checked in this way when the weights are concentrated, even at moderate $\rho$; in our examples its relative error exceeded $100\%$ when one site carried half the weight. In designing networks, increasing availability or spreading the weights more evenly reduces $\rho$, whereas optimal weights, which emphasize a small fraction of locations \cite{Seshadri2018}, increase it. Adding highly correlated observations stabilizes the denominator but adds little independent information. Unlike Monte Carlo simulation of missing data, which carries sampling error and must be repeated for each configuration, these formulas are deterministic once the statistics of the field are known.

\begin{table}[t]
\centering
\small
\caption{Practical guidance for the variance of a weighted spatial average under MCAR. Here $\rho^2=\tfrac{1-\alpha}{\alpha}\sum_i\beta_i^2$; the thresholds are indicative, taken from the examples of Section~\ref{sec:numerics}.}
\label{tab:practical}
\renewcommand{\arraystretch}{1.3}
\begin{tabular}{>{\raggedright\arraybackslash}p{0.27\linewidth} >{\raggedright\arraybackslash}p{0.38\linewidth} >{\raggedright\arraybackslash}p{0.25\linewidth}}
\hline
Situation & Recommended calculation & Basis \\
\hline
Uniform weights, any spatial covariance & Exact single-sum expression, Eq.~\eqref{eq:exact-uniform-var}; no simulation needed & Section~\ref{sec:exact-uniform} \\
General weights, $\rho$ small (below about $0.2$) and diffuse weights & First-order approximation $(0,1)$; for uniform weights the relative error is of order $\rho^2$, so a target relative error $\varepsilon$ requires roughly $\rho^2\le\varepsilon$, a rule that is indicative for unequal weights & Proposition~\ref{prop:rho4}, Remarks~\ref{rem:lambda} and~\ref{rem:hierarchy}, Fig.~\ref{fig:rho_master} \\
Correlated field (e.g.\ rainfall) & Compute $\lambda=\mathrm{Var}(\bar r)/\mathbf{E}s^2$; $(0,1)$ is adequate and conservative for the total variance, but its error relative to the contribution of missing data is about $\lambda$ & Remark~\ref{rem:lambda}, Figs.~\ref{fig:imd_validation}(c) and~\ref{fig:stress}(a) \\
Large mean level relative to the variability of the field & Use $(0,1)$ or an exact calculation; avoid $(0,0)$, $(1,1)$, and $(0,2)$, which are not invariant to a common shift & Section~\ref{sec:shift}, Fig.~\ref{fig:translation} \\
$\rho$ not small (small network or low availability) & Exact calculation by enumeration or simulation; treat $(1,0)$, $(1,1)$, and $(0,2)$ as diagnostics & Remark~\ref{rem:hierarchy}, Fig.~\ref{fig:stress}(b) \\
Concentrated weights, e.g.\ optimized weights & Check $(0,1)$ against enumeration or simulation even at moderate $\rho$ & Remark~\ref{rem:hierarchy}, Fig.~\ref{fig:stress}(c) \\
\hline
\end{tabular}
\end{table}

Several of these results apply to self-normalized estimators more generally, including H\'ajek estimators in survey sampling and self-normalized importance sampling, since they use only that the spatial average is a weighted mean of the reported values with weights summing to one. Missing data with a common availability is Bernoulli sampling, and the spatial average is then a H\'ajek estimator \cite{Sarndal1992}. For such estimators the delta method is usually justified by a convergent Taylor series, but the series diverges with positive probability whenever the denominator can exceed twice its mean, as under Bernoulli sampling with a sampling fraction of at most one half, or in importance sampling with unbounded weights. Our error bound nevertheless holds at every sample size, which complements the justification given for the ratio estimator under simple random sampling \cite{DavidSukhatme1974}. The invariance criterion also applies generally, so that corrections that add only denominator terms, as is common, introduce a spurious dependence on the mean of the observed values.

The results also show what can be learned from the weights alone. In survey practice the coefficient of variation of the denominator, which is $\rho$ here, is used informally to judge whether a linearized variance can be trusted \cite{Cochran1977}, and $1/\sum_i\beta_i^2$ is Kish's effective sample size \cite{Kish1965}, whose use in importance sampling has been questioned \cite{Elvira2022}. Such measures are partly justified, since $\rho$ controls the truncation error. The error relative to the contribution of missing data is, however, set by $\lambda$, which depends on the observed values and cannot be inferred from the weights, and with concentrated weights $\rho$ alone does not determine the accuracy. When the field is treated as fixed, so that only the sampling varies, $\lambda$ vanishes, and for uniform weights the first-order approximation then underestimates the variance by a relative amount $(1+\alpha)/(\alpha N)$ to leading order. Some of the small-sample underestimation long noted for linearized variances of ratio estimators \cite{Cochran1977} is therefore intrinsic to the linearization. The explicit formulas, the threshold $\alpha>\tfrac12$, the error rates, and $\lambda$, by contrast, rely on the model of missing data used here, and $\lambda$ also on uniform weights.

Several limitations should be noted. The expressions developed here describe prior estimates of variance, taken over both the field and the missing data, which are useful when it is not known which observations will be available \cite{Seshadri2018}. Of course, knowing which observations are actually available at a given time would affect posterior estimates of variance. Separating these two sources of variability for general weights, as we have done for uniform weights, for example by expanding about the average with complete reporting, is a natural next step. We have not treated the bias that missing data introduce, which arises with nonuniform weights when the weights are correlated with the site means \cite{Seshadri2018}. The sharper error rates are proved only for uniform weights, and tests with nonuniform weights used a small network, so realistic area-weighted networks remain to be examined. In applications the statistics of the field, treated here as known, must be estimated, which adds uncertainty distinct from truncation error; short records may require regularized covariance estimates. Pooling these statistics over the year also counts the seasonal cycle as variability of the field, which seasonally stratified statistics would avoid. Finally, sites were assumed to report independently of one another. Persistent outages, clustered gaps, or reporting that depends on the field change the statistics of the denominator and may prevent $\rho$ from vanishing. Reporting probabilities that differ between sites are covered by Remark~\ref{prop:het-alpha}, but other forms of missing data, and availabilities or weights that change with network size, require further analysis. A companion paper will consider large-sample and non-asymptotic distribution theory.

In summary, low-order formulas approximate the variance of a spatial average well when the total weight of reported observations varies little from one period to the next, as when availability is high and the weights are spread over many sites, even when the underlying series diverges. The first-order approximation of \cite{Seshadri2018} is then the natural choice. It is unaffected by the mean of the field, higher truncations need not improve on it, and for uniform weights and correlations that decay with distance its error decreases rapidly as the network grows. For correlated fields such as rainfall over India, it overestimates the contribution of missing data, while remaining accurate, and conservative, for the variance as a whole. For uniform weights the exact variance is a single sum and needs no approximation, and where the total reported weight is variable, or the weights are concentrated, the first-order approximation should be checked by enumeration or simulation.

\paragraph{Code and data availability.}
The $1^{\circ}\times1^{\circ}$ gridded daily rainfall over India used in this study was produced by the India Meteorological Department \cite{Rajeevan2006}. The public MATLAB repository contains the code and data required to reproduce the four main-text and three Supplementary Information figures and the values in Table~\ref{tab:convergence} and Supplementary Information Table~S3, with one script for each figure and a small set of helper functions, including the exact reporter-count calculation used for the uniform-weight benchmarks. The code is archived on Zenodo \cite{SeshadriCode2026} and developed on \href{https://github.com/akseshadri/convergent-moment-hierarchy}{GitHub}.

\appendix
\allowdisplaybreaks

\section{Tools and a decomposition used in the proofs}
\label{app:tools}

The appendices show why the statements of Section~\ref{sec:moments} hold. Readers mainly interested in applying the approximations need refer only to the statements themselves and Tables~\ref{tab:three-results}, \ref{tab:truncation-properties}, and~\ref{tab:practical}. The proofs apply a few standard results, which we describe here in words.

\paragraph{Tools.}
The root-mean-square size of a random variable $X$ is $\lVert X\rVert_2=(\mathbf{E}X^2)^{1/2}$, and more generally $\lVert X\rVert_p=(\mathbf{E}|X|^p)^{1/p}$; a larger $p$ gives more weight to rare large values. The Cauchy-Schwarz inequality, $|\mathbf{E}[XY]|\le\lVert X\rVert_2\lVert Y\rVert_2$, bounds the average of a product by the sizes of its factors. Chebyshev's inequality, $\mathbf{P}(|X|>t)\le\mathbf{E}X^2/t^2$, says that a variable rarely strays far beyond its root-mean-square size. Bernstein's inequality sharpens this for a sum of many small independent terms, such as the fluctuation $V=S-m_S$ of the total reported weight: if $V=\sum_iX_i$ with independent $X_i$ of mean zero, $|X_i|\le M$, and $\sigma_V^2=\sum_i\mathrm{Var}(X_i)$, then
\begin{equation}
\mathbf{P}(|V|>t)
\le
2\exp\left(
-\frac{t^2/2}{\sigma_V^2+Mt/3}
\right),
\label{eq:bernstein}
\end{equation}
so that large departures become exponentially unlikely. Under MCAR, $X_i=\beta_i(s_i-\alpha)$, so we may take $M=\max_i\beta_i$ and $\sigma_V^2=\alpha(1-\alpha)W_2=\mathrm{Var}(S)$. Dominated convergence allows an infinite series to be averaged term by term when its partial sums are bounded by a single variable of finite mean. Finally, $\mathcal O(N^{-k})$ denotes a quantity bounded by a constant times $N^{-k}$, with the constant independent of $N$. We also use the identity
\begin{equation}
\mathrm{Var}(X)-\mathrm{Var}(Y)
=
\mathrm{Cov}(X-Y,X+Y)
=
2\,\mathrm{Cov}(X-Y,X)-\mathrm{Var}(X-Y),
\label{eq:var-diff}
\end{equation}
which follows from bilinearity of covariance, to compare an exact variance with that of a truncation.

\paragraph{Uniform weights: a decomposition used in Appendices~\ref{app:proof-cor-mar} and~\ref{app:proof-rho4}.}
With uniform weights, $S=N^{-1}\sum_is_i$, $m_S=\alpha$, $\mathrm{Var}(S)=\alpha(1-\alpha)/N$, and $\rho^2=(1-\alpha)/(\alpha N)$. The moment bound used in Lemma~\ref{lem:term-order} gives $\lVert V\rVert_{2k}=\mathcal O(N^{-1/2})$ for every fixed integer $k\ge1$, and hence
\begin{equation}
\mathbf{E}|V|^{2k}
=
\mathcal O(N^{-k})
=
\mathcal O(\rho^{2k}),
\label{eq:V-even-fixedalpha}
\end{equation}
since $N^{-1}$ and $\rho^2$ are proportional at fixed $\alpha$. For a bounded field, $|r_i|\le\Lambda$, we have $|R|\le\Lambda S\le\Lambda$ and $|m_R|\le\Lambda\,\mathbf{E}S=\Lambda\alpha$, so that $|U|\le2\Lambda$, while $|V|\le1$ because $S$ and $m_S$ lie between $0$ and $1$.

The numerator fluctuation separates into a contribution due to the field and a contribution due to reporting. Writing $x_i:=s_i-\alpha$ and $s_i=\alpha+x_i$,
\[
R=\frac{\alpha}{N}\sum_i r_i+\frac1N\sum_i x_ir_i=m_R+\alpha\xi+\eta,
\qquad
\xi:=\frac1N\sum_i(r_i-m_i),
\qquad
\eta:=\frac1N\sum_ix_ir_i,
\]
so that $U=\alpha\xi+\eta$ and $V=N^{-1}\sum_ix_i$. Since reporting is independent of the field, $\xi$, which depends only on the field, is independent of $V$, which depends only on reporting. Moreover, given the field, $\eta$ is an average of independent, centered terms $x_ir_i$, so that $\mathbf{E}(\eta\mid r_1,\ldots,r_N)=0$ and hence $\mathbf{E}[\xi\eta]=0$. The variance of $\xi$ is
\[
\mathbf{E}\xi^2=\frac1{N^2}\Big(\sum_i\sigma_i^2+\sum_{i\ne j}\sigma_{ij}\Big)\le\frac1{N^2}\Big(\sum_i\sigma_i^2+\sum_{i\ne j}|\sigma_{ij}|\Big),
\]
which is $\mathcal O(N^{-1})$ under the local-dependence condition, Eq.~\eqref{eq:local-dep-sec2}, since $\sum_i\sigma_i^2=\mathcal O(N)$ for a bounded field. Thus the spatial dependence of the field enters only through $\xi$, and the randomness of reporting only through $\eta$ and $V$.

\section{When the series converges (Proposition~\ref{prop:var-conv}), and how large its terms are (Lemma~\ref{lem:term-order})}
\label{app:proofs-sec2}

Proposition~\ref{prop:var-conv} states that the series for the ratio converges, and may be averaged term by term, when the total reported weight never deviates more than a fixed fraction from its mean. The idea is that the series is then dominated by a convergent geometric series. Lemma~\ref{lem:term-order} states how large the successive terms are on average, which is what the error bounds of Section~\ref{sec:accuracy} need.

\paragraph{Convergence.}
We write $f(R,S)=\sum_{n\ge0}a_n+\sum_{n\ge0}b_n$, with
\[
a_n=(-1)^n\frac{(R-\mathbf{E}R)(S-\mathbf{E}S)^n}{(\mathbf{E}S)^{n+1}},
\qquad
b_n=(-1)^n\frac{\mathbf{E}R\,(S-\mathbf{E}S)^n}{(\mathbf{E}S)^{n+1}}.
\]
On every nonempty reporting pattern, condition~\eqref{eq:suff-a.s} bounds the absolute terms by those of a geometric series with ratio $\bar q<1$, so that the series converges absolutely to $f$, and its Cauchy product converges absolutely to $f^2$ \cite{Rudin1976}. Summing the geometric bounds, the partial sums are dominated by a single variable,
\[
\sum_{n}\big(|a_n|+|b_n|\big)
\le\frac{|R-\mathbf{E}R|+|\mathbf{E}R|}{\mathbf{E}S\,(1-\bar q)},
\]
which has a finite second moment because $\mathbf{E}r_i^2<\infty$. Dominated convergence therefore allows the second-moment series to be averaged term by term over the nonempty patterns. The all-missing pattern, on which the estimator takes the conventional value $r_*$, is treated separately, and the characterization under MCAR is shown in the text following the proposition.

\paragraph{Size of the terms.}
Centering a variable does not increase its root-mean-square size, so the $n$th denominator term satisfies
\[
\left\|
\frac{m_R(V^n-\mathbf{E}V^n)}{m_S^{n+1}}
\right\|_2
\le
\frac{|m_R|\,\lVert V^n\rVert_2}{m_S^{n+1}}
=
\frac{|m_R|}{m_S}
\left(\mathbf{E}|V/m_S|^{2n}\right)^{1/2}
=
\frac{|m_R|}{m_S}\rho_{2n}^{\,n}.
\]
For the mixed term, centering followed by the Cauchy-Schwarz inequality, which separates the numerator from the denominator, gives
\[
\left\|
\frac{UV^n-\mathbf{E}(UV^n)}{m_S^{n+1}}
\right\|_2
\le
\frac{\{\mathbf{E}(U^2V^{2n})\}^{1/2}}{m_S^{n+1}}
\le
\frac{(\mathbf{E}U^4)^{1/4}}{m_S}
\left(\mathbf{E}|V/m_S|^{4n}\right)^{1/4}
=
\frac{\lVert U\rVert_4}{m_S}\rho_{4n}^{\,n},
\]
which is part (i). Under MCAR, $V=\sum_i\beta_i(s_i-\alpha)$ is a sum of independent, centered, bounded terms. Such a sum behaves like a Gaussian variable in the sense that all its moments are controlled by its standard deviation: by a standard moment inequality, or by Hoeffding's lemma, for every fixed $p\ge2$ there is a constant $C_p$, depending only on $p$, with $\lVert V\rVert_p\le C_p(\sum_i\beta_i^2)^{1/2}=C_p\sqrt{W_2}$. Since $\lVert V\rVert_2=\sqrt{\alpha(1-\alpha)W_2}$, this gives $\lVert V\rVert_p\le C_{p,\alpha}\lVert V\rVert_2$ with $C_{p,\alpha}:=C_p/\sqrt{\alpha(1-\alpha)}$, and, because $\lVert V\rVert_p$ does not decrease with $p$, $\rho\le\rho_p\le C_{p,\alpha}\rho$, which is part (ii). Part (iii) follows from parts (i) and (ii), since under MCAR $|m_R|/m_S=|m_r|$.

The geometric argument requires the largest relative departure of the total reported weight to stay below one. Under MCAR this fails for $\alpha\le\tfrac12$ on the pattern in which every site reports, which is where the truncated variances in Table~\ref{tab:convergence} fail to converge.

\section{Why the truncation error is controlled by the concentration of the denominator (Theorem~\ref{thm:rho-error})}
\label{app:proof-thm}

The error of a truncation is the difference between two variances, which Eq.~\eqref{eq:var-diff} splits into a covariance and a variance involving the omitted part $f-P$. We establish upper bounds on each separately on reporting patterns where the total reported weight is close to its mean, i.e. the good event $G$ where the omitted terms form a geometric tail, and on the remaining patterns that are rare and enter only through their probability.

Since $\mathcal T_{0,n_2}[f]=P-\mathbf{E}P$ and $\mu_2^{(0,n_2)}=\mathrm{Var}(P)$, Eq.~\eqref{eq:var-diff} gives
\begin{equation}
\mathrm{Var}(f)-\mu_2^{(0,n_2)}
=
2\,\mathrm{Cov}(f-P,f)-\mathrm{Var}(f-P),
\label{eq:proof-split}
\end{equation}
so that $|\mathrm{Var}(f)-\mu_2^{(0,n_2)}|\le2|\mathrm{Cov}(f-P,f)|+\mathrm{Var}(f-P)$.

\paragraph{The omitted part on the good event.}
On $G$ we have $|V/m_S|\le\tau<1$, so that $f=\{(m_R+U)/m_S\}\sum_{n\ge0}(-V/m_S)^n$, and the omitted part consists of two geometric tails,
\[
f-P
=
\underbrace{
\sum_{n=1}^{\infty}(-1)^n\frac{UV^n}{m_S^{n+1}}
}_{\text{mixed tail}}
+
\underbrace{
m_R\sum_{n=n_2+1}^{\infty}(-1)^n\frac{V^n}{m_S^{n+1}}
}_{\text{denominator tail}},
\]
each at most its first term divided by $1-\tau$:
\[
|\text{mixed tail}|
\le
\frac{|U||V|}{m_S^2(1-\tau)},
\qquad
|\text{denominator tail}|
\le
\frac{|m_R|}{m_S}
\frac{(|V|/m_S)^{n_2+1}}{1-\tau}.
\]
Squaring, using $(x+y)^2\le2x^2+2y^2$ and $\mathbf{1}_G\le1$, and averaging, the good-event remainder $\delta_G^2=\mathbf{E}[(f-P)^2\mathbf{1}_G]$ satisfies
\[
\delta_G^2
\le
\frac{2}{m_S^2(1-\tau)^2}
\left[
\frac{\mathbf{E}[U^2V^2]}{m_S^2}
+
m_R^2\rho_{2(n_2+1)}^{\,2(n_2+1)}
\right],
\]
since $\mathbf{E}|V/m_S|^{2(n_2+1)}=\rho_{2(n_2+1)}^{\,2(n_2+1)}$. This is Eq.~\eqref{eq:thm-delta}.

\paragraph{The covariance term.}
With $g:=f-\mathbf{E}f$, which has mean zero, $\mathrm{Cov}(f-P,f)=\mathbf{E}[(f-P)g]$. On $G$ the Cauchy-Schwarz inequality bounds this average by $\delta_G\lVert g\rVert_2$. On $G^c$ we use only that the field is bounded, so that $|g|\le2\Lambda$, and apply the CauchySchwarz inequality once more
\[
\big|\mathrm{Cov}(f-P,f)\big|
\le
\delta_G\lVert g\rVert_2
+
2\Lambda\,\mathbf{E}\big[|f-P|\mathbf{1}_{G^c}\big]
\le
\delta_G\lVert g\rVert_2
+
2\Lambda\lVert f-P\rVert_2\,\mathbf{P}(G^c)^{1/2}.
\]

\paragraph{The variance term.}
In the same way, splitting the mean square of the omitted part over $G$ and $G^c$, and using $\mathbf{1}_{G^c}^2=\mathbf{1}_{G^c}$ with the Cauchy--Schwarz inequality on $G^c$,
\[
\mathrm{Var}(f-P)
\le
\mathbf{E}(f-P)^2
=
\delta_G^2
+
\mathbf{E}\big[(f-P)^2\mathbf{1}_{G^c}\big]
\le
\delta_G^2
+
\left\{\mathbf{E}(f-P)^4\right\}^{1/2}
\mathbf{P}(G^c)^{1/2}.
\]

Combining the two bounds with Eq.~\eqref{eq:proof-split} gives
\[
\big|\mathrm{Var}(f)-\mu_2^{(0,n_2)}\big|
\le
2\lVert g\rVert_2\,\delta_G+\delta_G^2
+
\left[
4\Lambda\lVert f-P\rVert_2
+
\left\{\mathbf{E}(f-P)^4\right\}^{1/2}
\right]
\mathbf{P}(G^c)^{1/2},
\]
which is Eq.~\eqref{eq:thm-var-to-delta}, the quantity in square brackets being $C_\Lambda$. This constant is finite because $f$ is bounded, and so is $P$, a polynomial in $U$ and $V$ with $|U|\le2\Lambda$ and $|V|\le1$; Appendix~\ref{app:proof-cor-mar} shows that under MCAR at fixed $\alpha$ the bound does not depend on $N$. The probability of the rare event is bounded by Chebyshev's inequality, $\mathbf{P}(|V|>\tau m_S)\le\mathbf{E}V^2/(\tau m_S)^2=\rho^2/\tau^2$, and, under independent reporting, more sharply by Bernstein's inequality, Eq.~\eqref{eq:bernstein}.

The bound thus has the two components seen in Fig.~\ref{fig:rho_master}(c)-(d): what the truncation omits where the normalized denominator is concentrated near zero, and a contribution from patterns outside the good event, whose probability falls rapidly as the network grows.

\section{Why the truncation error vanishes as the network grows (Corollary~\ref{cor:mar-error})}
\label{app:proof-cor-mar}

For uniform weights the denominator concentrates as the network grows, so both components of the bound of Theorem~\ref{thm:rho-error} vanish. We first show this for any spatial dependence, using only that the field is bounded, and then obtain the sharper rate under local dependence, using the decomposition of Appendix~\ref{app:tools}.

\paragraph{Any spatial dependence.}
Since $|U|\le2\Lambda$, $\mathbf{E}[U^2V^2]\le4\Lambda^2\mathbf{E}V^2=\mathcal O(N^{-1})$, and Eq.~\eqref{eq:V-even-fixedalpha} with $k=n_2+1$ makes the denominator term of Eq.~\eqref{eq:thm-delta} of order $N^{-(n_2+1)}$, so that $\delta_G^2=\mathcal O(N^{-1})$ for every fixed $n_2\ge0$. For uniform weights the largest weight is $1/N$, and Bernstein's inequality gives
\[
\mathbf{P}(G^c)
\le
2\exp(-cN),
\qquad
c=
\frac{\tau^2\alpha}
{2(1-\alpha+\tau/3)}
>0,
\]
for fixed $\alpha\in(0,1)$ and $\tau\in(0,1)$. The coefficient $C_\Lambda$ of this probability does not grow with $N$: the partial sum $P=U/\alpha+m_R\sum_{n=0}^{n_2}(-1)^nV^n/\alpha^{n+1}$ satisfies, since $|U|\le2\Lambda$, $|V|\le1$, and $|m_R|\le\Lambda\alpha$,
\[
|P|
\le
\frac{2\Lambda}{\alpha}
+
\Lambda\sum_{n=0}^{n_2}\alpha^{-n}
=:K_P,
\]
which depends on $\Lambda$, $\alpha$, and $n_2$ but not on $N$. With $|f|\le\Lambda$, $|f-P|\le\Lambda+K_P=:K$, so that $\lVert f-P\rVert_2\le K$, $\{\mathbf{E}(f-P)^4\}^{1/2}\le K^2$, and $C_\Lambda\le4\Lambda K+K^2$. Finally $\lVert f-\mathbf{E}f\rVert_2\le\Lambda$, so that Eq.~\eqref{eq:thm-var-to-delta} gives
\[
\big|\mathrm{Var}(f)-\mu_2^{(0,n_2)}\big|
=
\mathcal O(N^{-1/2})
+
\mathcal O(N^{-1})
+
\mathcal O(\exp(-cN/2))
\longrightarrow0,
\]
which is the first statement of Corollary~\ref{cor:mar-error}, without any restriction on spatial dependence.

\paragraph{Local dependence.}
The sharper rate rests on the size of the coupling moment $\mathbf{E}[U^2V^2]$. With $U=\alpha\xi+\eta$, $U^2V^2\le2\alpha^2\xi^2V^2+2\eta^2V^2$, and the two parts are controlled by different properties. The field part involves the spatial dependence: since $\xi$ and $V$ are independent, $\mathbf{E}[\xi^2V^2]=\mathbf{E}\xi^2\,\mathbf{E}V^2=\mathcal O(N^{-2})$ under the local-dependence condition. The reporting part is controlled by independent reporting and boundedness of the field, whatever the spatial dependence. Given the field, $\eta=N^{-1}\sum_iZ_i$ with $Z_i:=x_ir_i$ independent, centered, and bounded by $\Lambda$, and in the fourth moment of their sum only terms in which every index appears at least twice survive:
\[
\mathbf{E}\!\left[
\Big(\sum_i Z_i\Big)^4
\Bigm| r_1,\ldots,r_N
\right]
=
\sum_i\mathbf{E}(Z_i^4\mid r)
+
6\sum_{i<j}
\mathbf{E}(Z_i^2\mid r)\,
\mathbf{E}(Z_j^2\mid r).
\]
The first sum is $\mathcal O(N)$ and the second $\mathcal O(N^2)$, whatever the field realization, so that dividing by $N^4$ and averaging over the field, $\mathbf{E}\eta^4=\mathcal O(N^{-2})$. With $\mathbf{E}V^4=\mathcal O(N^{-2})$ from Eq.~\eqref{eq:V-even-fixedalpha}, the Cauchy-Schwarz inequality gives $\mathbf{E}[\eta^2V^2]\le(\mathbf{E}\eta^4)^{1/2}(\mathbf{E}V^4)^{1/2}=\mathcal O(N^{-2})$. Hence
\begin{equation}
\mathbf{E}[U^2V^2]
=
\mathcal O(N^{-2})
=
\mathcal O(\rho^4).
\label{eq:U2V2-rate-sec2}
\end{equation}
Substituting Eqs.~\eqref{eq:V-even-fixedalpha} and~\eqref{eq:U2V2-rate-sec2} into Eq.~\eqref{eq:thm-delta}, with $m_S=\alpha$ and $|m_R|\le\Lambda\alpha$,
\[
\delta_G^2
=
\mathcal O(N^{-2})
+
\mathcal O\!\left(N^{-(n_2+1)}\right).
\]
The first term is the coupling between numerator and denominator, and the second the omitted denominator terms. For $n_2=0$ the second is of order $N^{-1}$ and dominates, whereas for $n_2\ge1$ it is of order $N^{-2}$ or smaller, so that
\[
\delta_G
=
\begin{cases}
\mathcal O(N^{-1/2})
=\mathcal O(\rho),
& n_2=0,\\[3pt]
\mathcal O(N^{-1})
=\mathcal O(\rho^2),
& n_2\ge1,
\end{cases}
\]
and more generally the pure-denominator contribution to $\delta_G^2$ is $\mathcal O(\rho^{2(n_2+1)})$. Since $C_\Lambda$ is bounded uniformly in $N$, the rare-event contribution is exponentially smaller. These are the rates stated in Corollary~\ref{cor:mar-error}. The exponential fall of the rare-event probability is the one shown in Fig.~\ref{fig:rho_master}(d), well below the Chebyshev bound.

\section{Why the error of the first-order approximation is of order \texorpdfstring{$\rho^4$}{rho\^{}4} (Proposition~\ref{prop:rho4})}
\label{app:proof-rho4}

For $(0,0)$ it is enough to note that both the exact variance and the approximation are of order $1/N$. For $(0,1)$ the general bound gives only order $\rho^3$, and a sharper rate requires the explicit form of what the truncation omits. The omitted part is of second order in the fluctuations, and its covariance with the retained first-order part reduces to averages of products of three fluctuations, each of order $N^{-2}$. We treat $(0,0)$ first and then $(0,1)$, throughout with uniform weights, fixed $\alpha$, a bounded field, and the local-dependence condition.

\paragraph{The simplest approximation, $(0,0)$.}
Here $\mu_2^{(0,0)}=\mathrm{Var}(R)/\alpha^2$, and with $U=\alpha\xi+\eta$ from Appendix~\ref{app:tools}, $\mathbf{E}[\xi\eta]=0$ and $\mathbf{E}\xi^2=\mathcal O(N^{-1})$. Given the field, $\eta$ averages independent terms, so that
\[
\mathbf{E}\eta^2
=
\frac{\alpha(1-\alpha)}{N^2}
\sum_i\mathbf{E}r_i^2
=
\mathcal O(N^{-1}),
\]
and $\mathrm{Var}(R)=\mathbf{E}U^2=\mathcal O(N^{-1})$. Therefore
\begin{equation}
\mu_2^{(0,0)}
=
\mathcal O(N^{-1}).
\label{eq:mu00-rate}
\end{equation}
The exact variance has the same order. With $\zeta:=N^{-1}\sum_ix_i(r_i-m_r)$, and since $N^{-1}\sum_i(r_i-m_r)=N^{-1}\sum_i(r_i-m_i)=\xi$, we have $R-m_rS=\alpha\xi+\zeta$, with $\mathbf{E}(\zeta\mid r_1,\ldots,r_N)=0$, so that $\mathbf{E}[\xi\zeta]=0$. Independence of the reporting indicators gives
\[
\mathbf{E}\zeta^2
=
\frac{\alpha(1-\alpha)}{N^2}
\sum_i\mathbf{E}(r_i-m_r)^2
=
\mathcal O(N^{-1}),
\]
so that
\begin{equation}
\mathbf{E}(R-m_rS)^2
=
\mathcal O(N^{-1}).
\label{eq:R-minus-mrS}
\end{equation}
On $G$, $S\ge(1-\tau)\alpha$, so that $|f-m_r|=|R-m_rS|/S\le|R-m_rS|/\{(1-\tau)\alpha\}$, while on $G^c$, $|f-m_r|\le2\Lambda$ and $\mathbf{P}(G^c)=\mathcal O(\exp(-cN))$. Hence
\[
\mathrm{Var}(f)
\le
\mathbf{E}(f-m_r)^2
\le
\frac{\mathbf{E}(R-m_rS)^2}
{(1-\tau)^2\alpha^2}
+
4\Lambda^2\mathbf{P}(G^c)
=
\mathcal O(N^{-1}),
\]
and, with Eq.~\eqref{eq:mu00-rate}, $|\mathrm{Var}(f)-\mu_2^{(0,0)}|\le\mathrm{Var}(f)+\mu_2^{(0,0)}=\mathcal O(N^{-1})=\mathcal O(\rho^2)$.

\paragraph{The first-order approximation, $(0,1)$.}
The uncentered partial sum is $P_{01}=m_R/m_S+U/m_S-m_RV/m_S^2$, so that $\mu_2^{(0,1)}=\mathrm{Var}(P_{01})$, with centered part $L:=P_{01}-\mathbf{E}P_{01}=U/m_S-m_RV/m_S^2$, the retained first-order fluctuations of numerator and denominator. Writing $f=P_{01}+\mathcal R$, where $\mathcal R$ is the omitted part,
\begin{equation}
\mathrm{Var}(f)-\mu_2^{(0,1)}
=
2\,\mathbf{E}[L\mathcal R]+\mathrm{Var}(\mathcal R),
\label{eq:01-var-decomp}
\end{equation}
since $\mathbf{E}L=0$. On $G$, subtracting $P_{01}$ from $f=(m_R+U)/(m_S+V)$ gives the omitted part exactly,
\begin{equation}
\mathcal R
=
\frac{
-\dfrac{UV}{m_S^2}
+
\dfrac{m_RV^2}{m_S^3}
}{
1+V/m_S
}.
\label{eq:exact-01-remainder}
\end{equation}
Its numerator is of second order in the fluctuations: the first term is the leading coupling between numerator and denominator that $(0,1)$ omits, and the second the leading quadratic denominator term.

\emph{The variance of the omitted part.} Since $|1+V/m_S|\ge1-\tau$ on $G$, $m_S=\alpha$, and $|m_R|\le\Lambda\alpha$, Eq.~\eqref{eq:exact-01-remainder} gives $\mathcal R^2\mathbf{1}_G\le C(U^2V^2+V^4)\mathbf{1}_G$ with $C$ independent of $N$, so that Eqs.~\eqref{eq:V-even-fixedalpha} and~\eqref{eq:U2V2-rate-sec2} give $\mathbf{E}[\mathcal R^2\mathbf{1}_G]=\mathcal O(N^{-2})$. On $G^c$ no expansion is needed: $|f|\le\Lambda$ and $P_{01}$ is bounded uniformly in $N$, so that $\mathcal R$ is bounded and $\mathbf{E}[\mathcal R^2\mathbf{1}_{G^c}]=\mathcal O(\exp(-cN))$. Hence
\begin{equation}
\mathrm{Var}(\mathcal R)
\le
\mathbf{E}\mathcal R^2
=
\mathcal O(N^{-2}).
\label{eq:Rvar-rate}
\end{equation}

\emph{The covariance of the omitted and retained parts.} A bound using only the sizes of $L$ and $\mathcal R$ would give order $N^{-3/2}$, and the sharper rate comes from the explicit form of $\mathcal R$. Writing $A_0:=-UV/m_S^2+m_RV^2/m_S^3$ for the second-order numerator in Eq.~\eqref{eq:exact-01-remainder}, on $G$
\[
\mathcal R
=
A_0
-
\frac{(V/m_S)A_0}{1+V/m_S},
\]
where the second term carries one further factor of $V$. Multiplying the first-order $L$ by the second-order $A_0$ and averaging gives exactly three third-order moments,
\begin{equation}
\mathbf{E}[LA_0]
=
-\frac{\mathbf{E}[U^2V]}{m_S^3}
+
\frac{2m_R\,\mathbf{E}[UV^2]}{m_S^4}
-
\frac{m_R^2\,\mathbf{E}[V^3]}{m_S^5},
\label{eq:LA0-third-moments}
\end{equation}
which describe, respectively, two numerator fluctuations interacting with one denominator fluctuation, one numerator fluctuation with two denominator fluctuations, and the denominator alone. Under MCAR, Supplementary Information Section~S1 gives $\mathbf{E}[UV^2]=\alpha(1-\alpha)(1-2\alpha)M_3$, $\mathbf{E}[U^2V]=\alpha(1-\alpha)[V_3+\alpha C_\sigma+(1-2\alpha)H_3]$, and $\mathbf{E}V^3=\alpha(1-\alpha)(1-2\alpha)W_3$, which for uniform weights involve
\[
M_3=\frac1{N^3}\sum_i m_i,
\qquad
V_3=\frac1{N^3}\sum_i\sigma_i^2,
\qquad
H_3=\frac1{N^3}\sum_i m_i^2,
\qquad
C_\sigma
=
\frac{2}{N^3}\sum_{i\ne j}\sigma_{ij},
\qquad
W_3=\frac1{N^2}.
\]
Boundedness of the field makes $M_3$, $V_3$, and $H_3$ of order $N^{-2}$, and the local-dependence condition gives $|C_\sigma|\le2N^{-3}\sum_{i\ne j}|\sigma_{ij}|=\mathcal O(N^{-2})$. Since $m_S=\alpha$ and $m_R$ is bounded,
\begin{equation}
\mathbf{E}[LA_0]
=
\mathcal O(N^{-2}).
\label{eq:LA0-rate}
\end{equation}
The further factor of $V$ does not change this order. On $G$,
\[
\left|
L\,
\frac{(V/m_S)A_0}{1+V/m_S}
\right|
\le
C\left(
U^2V^2+|U||V|^3+V^4
\right),
\]
where the first and third terms have mean of order $N^{-2}$, and the Cauchy-Schwarz inequality gives $\mathbf{E}[|U||V|^3]\le\{\mathbf{E}(U^2V^2)\}^{1/2}\{\mathbf{E}V^4\}^{1/2}=\mathcal O(N^{-2})$, so that
\begin{equation}
\mathbf{E}\!\left[
\left|
L\,
\frac{(V/m_S)A_0}{1+V/m_S}
\right|
\mathbf{1}_G
\right]
=
\mathcal O(N^{-2}).
\label{eq:01-denom-correction}
\end{equation}
Separating $G$ and $G^c$, so as not to use the representation of $\mathcal R$ outside $G$,
\[
\mathbf{E}[L\mathcal R]
=
\mathbf{E}[LA_0]
-
\mathbf{E}[LA_0\mathbf{1}_{G^c}]
-
\mathbf{E}\!\left[
L\,\frac{(V/m_S)A_0}{1+V/m_S}\mathbf{1}_G
\right]
+
\mathbf{E}[L\mathcal R\mathbf{1}_{G^c}].
\]
At fixed $\alpha$, $L$, $A_0$, and $\mathcal R$ are bounded uniformly in $N$, so that the two terms supported on $G^c$ are $\mathcal O(\exp(-cN))$, and with Eqs.~\eqref{eq:LA0-rate}-\eqref{eq:01-denom-correction}, $\mathbf{E}[L\mathcal R]=\mathcal O(N^{-2})$. With Eq.~\eqref{eq:Rvar-rate} in Eq.~\eqref{eq:01-var-decomp},
\[
\big|\mathrm{Var}(f)-\mu_2^{(0,1)}\big|
=
\mathcal O(N^{-2})
=
\mathcal O(\rho^4).
\]

Thus the first-order approximation has the upper bound on its truncation error of order $\rho^4$, against $\rho^2$ for the truncation error of the simplest approximation. These are upper bounds, and particular fields may have smaller errors. For the homogeneous benchmark, the exact errors follow these two rates, with slopes $4$ and $2$ against $\rho$ in Fig.~\ref{fig:rho_master}(a).

\section*{CRediT authorship contribution statement}

\textbf{Ashwin K. Seshadri:} Conceptualization; Theory and Methods; Writing: first draft and revisions; Numerical simulation. \textbf{Abhishek Pal Majumder:} Theory and Methods; Writing: first draft and revisions.

\section*{Declaration of competing interest}
The authors have no known competing interests. 

\section*{Funding}
This research did not receive any external funding.

\section*{Declaration of generative AI and AI-assisted technologies in the manuscript preparation process}
The authors used Claude (Anthropic) to identify errors and refine proofs of results developed by the authors and presented in the main text and Appendices. Claude was also used for deriving and verifying the closed-form expressions for $\mu_{1,2}$, $\mu_{2,1}$, and $\kappa$ under the model of MCAR, and in completing the associated algebra in Supplementary Information Section~S1. All derivations were independently verified by the authors, who are responsible for any remaining errors.

\section*{Acknowledgements}
Thanks to S. Lakshmivarahan, Sreekar Vadlamani, and Vishal Vasan for helpful discussions.

\end{document}